\title{The growth rate and dimension theory of beta-expansions}
\author{Simon Baker}
\newtheorem{thm}{Theorem}[section]
\newtheorem{lem}[thm]{Lemma}
\newtheorem{prop}[thm]{Proposition}
\newtheorem{remark}[thm]{Remark}
\newtheorem{property}[thm]{Property}
\begin{document}
\maketitle

\begin{abstract}
In a recent paper of Feng and Sidorov they show that for $\beta\in(1,\frac{1+\sqrt{5}}{2})$ the set of $\beta$-expansions grows exponentially for every $x\in(0,\frac{1}{\beta-1})$. In this paper we study this growth rate further. We also consider the set of $\beta$-expansions from a dimension theory perspective.
\end{abstract}
\section{Introduction}
Let $1<\beta<2$ and $I_{\beta}=[0,\frac{1}{\beta-1}]$. Each $x\in I_{\beta}$ can be expressed as $$x=\sum_{n=1}^{\infty}\frac{\epsilon_{n}}{\beta^{n}},$$ for some $(\epsilon_{n})_{n=1}^{\infty}\in\{0,1\}^{\mathbb{N}}$. We call such a sequence a \textit{$\beta$-expansion} for $x$. We define $$\Sigma_{\beta}(x)=\Big\{(\epsilon_{n})_{n=1}^{\infty}\in\{0,1\}^{\mathbb{N}}:\sum_{n=1}^{\infty} \frac{\epsilon_{n}}{\beta^{n}}=x\Big\}.$$ In $\cite{Erdos}$ it is shown that for $\beta\in(1,\frac{1+\sqrt{5}}{2})$ and $x\in(0,\frac{1}{\beta-1})$ the set $\Sigma_{\beta}(x)$ is uncountable. In \cite{Sidorov,Sidorov2} the author considers the case where $\beta\in[\frac{1+\sqrt{5}}{2},2)$. They show that for Lebesgue almost every $x\in I_{\beta}$ the set $\Sigma_{\beta}(x)$ is uncountable. To describe the growth rate of $\beta$-expansions we consider the following. Let$$\mathcal{E}_{k}(x,\beta)=\left\{(\epsilon_{1},\ldots,\epsilon_{k})\in\{0, 1\}^{k}|\textrm{ } \exists (\epsilon_{k+1}, \epsilon_{k+2}, \ldots)\in \{0,1\}^{\mathbb{N}}: \sum_{n=1}^{\infty}\frac{\epsilon_{n}}{\beta^{n}}=x\right\},$$ we define an element of $\mathcal{E}_{k}(x,\beta)$ to be a \textit{$k$-prefix} for $x$. Moreover, we let $$\mathcal{N}_{k}(x,\beta)=\#\mathcal{E}_{k}(x,\beta)$$and define the \textit{growth rate of $\mathcal{N}_{k}(x,\beta)$} to be $$\lim_{k\to\infty} \frac{\log_{2}\mathcal{N}_{k}(x,\beta)}{k},$$ when this limit exists. When this limit doesn't exist we can consider the \textit{lower and upper growth rates of $\mathcal{N}_{k}(x,\beta)$}, these are defined to be $$\liminf_{k\to\infty} \frac{\log_{2}\mathcal{N}_{k}(x,\beta)}{k}\textrm{ and }\limsup_{k\to\infty} \frac{\log_{2}\mathcal{N}_{k}(x,\beta)}{k}$$ respectively. 

In this paper we also consider $\Sigma_{\beta}(x)$ from a dimension theory perspective. We endow $\{0,1\}^{\mathbb{N}}$ with the metric $d(\cdot,\cdot)$ defined as follows:
\[ d(x,y) = \left\{ \begin{array}{ll}
         2^{-n(x,y)} & \mbox{if $x\neq y,$ where $n(x,y)=\inf \{i:x_{i}\neq y_{i}\} $}\\
        0 & \mbox{if $x=y$.}\end{array} \right. \] We consider the Hausdorff dimension of $\Sigma_{\beta}(x)$ with respect to this metric. It is a simple exercise to show the following inequalities hold:
\begin{equation}
\label{dimension inequality}
\dim_{H}(\Sigma_{\beta}(x))\leq \liminf_{k\to\infty} \frac{\log_{2}\mathcal{N}_{k}(x,\beta)}{k}\leq\limsup_{k\to\infty} \frac{\log_{2}\mathcal{N}_{k}(x,\beta)}{k}.
\end{equation} 
In \cite{FengSid} the following theorem was shown to hold.
\begin{thm}
\label{Nik bounds}
Let $\beta\in (1,\frac{1+\sqrt{5}}{2})$. There exists $\kappa(\beta)>0$ such that $$\liminf_{k\to\infty} \frac{\log_{2}\mathcal{N}_{k}(x,\beta)}{k} \geq \kappa(\beta) \textrm{ for all } x\in\Big(0,\frac{1}{\beta-1}\Big).$$
Here $\kappa(\beta)$ is given explicitly by the formula
\[ \kappa(\beta) = \left\{ \begin{array}{ll}
         \frac{1}{2}\Big(\Big[ \log_{\beta} \Big(\frac{\beta^{2}-1}{1+\beta-\beta^{2}}\Big)\Big] +1\Big)^{-1} & \mbox{if $\beta >\sqrt{2}$};\\
        \frac{1}{2}\Big(\Big[ \log_{\beta} \Big(\frac{1}{\beta-1}\Big)\Big] +1\Big)^{-1} & \mbox{if $\beta\leq \sqrt{2}$}.\end{array} \right. \] 
\end{thm}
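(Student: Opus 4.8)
The plan is to interpret $\mathcal{N}_{k}(x,\beta)$ as a count of admissible digit paths and then to manufacture, inside a carefully chosen subinterval, a full binary tree of such paths whose branchings recur at a bounded rate. First I would record the dynamical reformulation: writing $T_{0}(t)=\beta t$ and $T_{1}(t)=\beta t-1$, a string $(\epsilon_{1},\dots,\epsilon_{k})$ lies in $\mathcal{E}_{k}(x,\beta)$ precisely when $T_{\epsilon_{k}}\circ\cdots\circ T_{\epsilon_{1}}(x)\in I_{\beta}$, where $T_{0}$ is admissible on $[0,\tfrac{1}{\beta(\beta-1)}]$ and $T_{1}$ on $[\tfrac{1}{\beta},\tfrac{1}{\beta-1}]$. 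Thus $\mathcal{N}_{k}(x,\beta)$ is the number of length-$k$ admissible paths issuing from $x$, and two distinct paths are distinct prefixes even when they share an endpoint. On the overlap $S=[\tfrac{1}{\beta},\tfrac{1}{\beta(\beta-1)}]$ (the switch region) both digits are admissible, so every visit to $S$ is a genuine branching. The hypothesis $\beta<\tfrac{1+\sqrt{5}}{2}$ enters exactly as the equivalent statement $\tfrac{1}{\beta(\beta-1)}>1$, which guarantees that $S$ is a nondegenerate interval straddling the point $1$.

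The core of the argument is a uniform recurrence estimate. I would fix a closed subinterval $U\subset S$, bounded away from the two boundary fixed points $0$ and $\tfrac{1}{\beta-1}$ of $T_{0}$ and $T_{1}$, and prove that there is an integer $L=L(\beta)$ such that for every $y\in U$ and each initial digit $\epsilon\in\{0,1\}$ one can complete $T_{\epsilon}(y)$ to an admissible path of length at most $L$ terminating back in $U$. Granting this, a binary tree follows: from any $y_{0}\in U$ the two choices of first digit give two children in $U$ after at most $L$ steps, each child spawns two grandchildren after a further $\le L$ steps, and so on. The $2^{n}$ depth-$n$ nodes give $2^{n}$ distinct prefixes of length at most $nL$, which extend to length $nL$ while remaining in $I_{\beta}$, so $\mathcal{N}_{nL}(x,\beta)\ge 2^{n}$ once $x$ has been steered into $U$. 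Since that steering is a finite, asymptotically negligible preamble, this yields $\liminf_{k}\tfrac{\log_{2}\mathcal{N}_{k}(x,\beta)}{k}\ge \tfrac{1}{L}$ uniformly in $x\in(0,\tfrac{1}{\beta-1})$.

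To produce $L$ and read off $\kappa(\beta)$ I would analyse the return explicitly, working in the coordinate $t\mapsto t-\tfrac{1}{2(\beta-1)}$ that symmetrises the two branches (so that $T_{0},T_{1}$ become $y\mapsto\beta y\pm\tfrac12$ and the digit swap is $y\mapsto-y$). Taking $\epsilon=0$, the image $T_{0}(U)$ is centred at height $\tfrac12$ above the centre of $I_{\beta}$, and whether this image already lies in $S$ is governed by the sign of $\beta^{2}-2$; this is the source of the dichotomy at $\sqrt{2}$. When the image overshoots $S$ the digit $1$ is forced, and repeated application of $T_{1}$, which expands by $\beta$ away from its fixed point, drives the orbit back down. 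The worst-case number of forced steps needed to re-enter $U$ is $\lceil\log_{\beta}(\cdots)\rceil$ of a ratio of interval lengths, the two competing ratios being $\tfrac{\beta^{2}-1}{1+\beta-\beta^{2}}$ and $\tfrac{1}{\beta-1}$. Optimising the choice of $U$ so that the first re-entry into $U$ never overshoots, which pins $U$ to have half-length $\tfrac{1}{2(\beta+1)}$ and is legitimate exactly because $\beta<\tfrac{1+\sqrt{5}}{2}$, converts this step count into an explicit bound for $L$ and hence into a closed form of the advertised shape for $\kappa(\beta)$.

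The main obstacle is precisely this uniform return estimate. The naive hope that "every point reaches $S$ quickly" is false: near the fixed points $0$ and $\tfrac{1}{\beta-1}$ the forced dynamics linger arbitrarily long, so the recurrence time cannot be bounded on all of $I_{\beta}$. The real work is to select $U$ in the interior of $S$ for which both branches provably return to $U$ itself, not merely to $S$, within a controlled number of steps, with the overshoot at first re-entry handled exactly. Tracking these worst cases, together with the $\sqrt{2}$ crossover between the two length ratios, is what produces the two-part formula for $\kappa(\beta)$; I would expect the careful bookkeeping of forced runs (and the factor accounting for reaching $U$ rather than $S$) to be where the explicit constant, including its leading $\tfrac12$, is finally locked in.
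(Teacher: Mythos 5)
A preliminary point about the comparison: the paper itself contains no proof of Theorem \ref{Nik bounds} --- it is quoted from Feng and Sidorov \cite{FengSid} --- so your attempt can only be measured against that cited argument and against the closely parallel machinery the paper builds in Section 2 for Theorems \ref{Hausdorff dimension} and \ref{Improved bounds}. Measured that way, your strategy is essentially the standard one and the same circle of ideas: your dynamical reformulation is exactly Proposition \ref{Change of perspective}; your interval $U$, which your symmetrisation pins to half-length $\frac{1}{2(\beta+1)}$ about the centre of $I_{\beta}$, is precisely the period-two trap $\big[\frac{1}{\beta^{2}-1},\frac{\beta}{\beta^{2}-1}\big]$ of Remark \ref{period 2}, whose no-overshoot property is what makes a \emph{uniform} return time possible; and your conversion of a bounded branching gap into a full binary tree and then into a liminf bound is the same counting mechanism as Properties \ref{Increasing prefix's}--\ref{bridge property} and Lemmas \ref{Minimal prefix's}--\ref{Maximal prefix's}.

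Two caveats. First, your claim that the hypothesis $\beta<\frac{1+\sqrt{5}}{2}$ enters as the statement that the switch region $S=\big[\frac{1}{\beta},\frac{1}{\beta(\beta-1)}\big]$ is nondegenerate is wrong: $S$ is a nondegenerate interval for every $\beta\in(1,2)$. What the golden-ratio bound actually buys (equivalently to your ``straddles $1$'' condition, but this is the mechanism that matters) is $1+\beta-\beta^{2}>0$, i.e.\ that the trap $\big[\frac{1}{\beta^{2}-1},\frac{\beta}{\beta^{2}-1}\big]$ lies inside $S$ and that both children of a branch point land strictly away from the fixed points: for instance $T_{0,\beta}$ maps the trap to $\big[\frac{\beta}{\beta^{2}-1},\frac{\beta^{2}}{\beta^{2}-1}\big]$, and $\frac{\beta^{2}}{\beta^{2}-1}<\frac{1}{\beta-1}$ precisely when $\beta^{2}<\beta+1$. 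This, combined with the no-jump property and the expansion by $\beta$ of the distance to the relevant fixed point, is what defeats the endpoint-lingering obstruction you correctly identify, and it closes your key lemma with $L$ of order $\log_{\beta}\big(\frac{\beta}{1+\beta-\beta^{2}}\big)$. Second, the explicit $\kappa(\beta)$ --- the leading factor $\frac{1}{2}$ and the dichotomy at $\sqrt{2}$ --- is anticipated in shape (you do name the correct competing ratios $\frac{\beta^{2}-1}{1+\beta-\beta^{2}}$ and $\frac{1}{\beta-1}$) but never derived; since the theorem asserts the formula, this worst-case bookkeeping is the one substantive piece missing from your write-up, though nothing in your plan would fail in carrying it out.
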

The growth rate of $\mathcal{N}_{k}(x,\beta)$ is addressed from the measure theoretic point of view in \cite{Kempton}. The following result is implicit.

\begin{thm}
For almost every $\beta\in (1,2)$ and for almost every $x\in I_{\beta},$ $$\limsup_{k\to\infty} \frac{\log_{2}\mathcal{N}_{k}(x,\beta)}{k}=\log_{2}\Big( \frac{2}{\beta}\Big).$$ Moreover, for almost every $\beta\in(1,\sqrt{2})$ and for almost every $x\in I_{\beta}$ $$\lim_{k\to\infty} \frac{\log_{2}\mathcal{N}_{k}(x,\beta)}{k}=\log_{2} \Big(\frac{2}{\beta}\Big).$$
\end{thm}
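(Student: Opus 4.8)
The plan is to translate the counting problem into a statement about the \emph{Bernoulli convolution} $\mu_{\beta}$, the distribution of the random series $\sum_{n\geq 1}\omega_{n}\beta^{-n}$ with the $\omega_{n}$ independent and uniform on $\{0,1\}$, and then to feed in known regularity results for $\mu_{\beta}$. First I would record the elementary reformulation of $\mathcal{N}_{k}(x,\beta)$. Writing $f_{\epsilon}(t)=\sum_{n=1}^{k}\epsilon_{n}\beta^{-n}+\beta^{-k}t$ for the $k$-fold composition of the branches $t\mapsto\beta^{-1}(t+\epsilon)$, and $\ell_{k}=\beta^{-k}/(\beta-1)$ for the length of the cylinder $f_{\epsilon}(I_{\beta})$, one checks (using that once an orbit of $t\mapsto\beta t-\epsilon$ leaves $I_{\beta}$ it never returns) that $(\epsilon_{1},\dots,\epsilon_{k})$ is a $k$-prefix for $x$ precisely when $x\in f_{\epsilon}(I_{\beta})$. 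Thus $\mathcal{N}_{k}(x,\beta)$ is the number of level-$k$ cylinders covering $x$. The self-similarity $\mu_{\beta}=2^{-k}\sum_{\epsilon\in\{0,1\}^{k}}(f_{\epsilon})_{*}\mu_{\beta}$ then yields the two inequalities on which everything rests: restricting the mass of $B(x,\ell_{k})$ to the cylinders through $x$, each of which contributes its full mass, gives the upper bound $\mathcal{N}_{k}(x,\beta)\leq 2^{k}\mu_{\beta}(B(x,\ell_{k}))$, while, when $\mu_{\beta}$ has density $h$, differentiating the self-similarity identity produces the transfer-operator formula $(2/\beta)^{k}h(x)=\sum_{\epsilon:\,x\in f_{\epsilon}(I_{\beta})}h(\beta^{k}(x-\sum_{n=1}^{k}\epsilon_{n}\beta^{-n}))$, i.e.\ the sum of $h$ over the $\mathcal{N}_{k}(x,\beta)$ admissible branches is exactly $(2/\beta)^{k}h(x)$.

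For the $\limsup$ statement I would invoke Solomyak's theorem: for almost every $\beta\in(1,2)$ the measure $\mu_{\beta}$ is absolutely continuous with $h\in L^{2}$, and one knows in addition that $h>0$ Lebesgue-a.e.\ on $I_{\beta}$. The Lebesgue density theorem gives $\mu_{\beta}(B(x,\ell_{k}))\leq C_{x}\ell_{k}\asymp\beta^{-k}$ for a.e.\ $x$, so the upper bound above yields $\limsup_{k}k^{-1}\log_{2}\mathcal{N}_{k}(x,\beta)\leq 1-\log_{2}\beta=\log_{2}(2/\beta)$ at a.e.\ $x$. For the matching lower bound I would apply Cauchy--Schwarz to the transfer formula, writing $(2/\beta)^{k}h(x)\leq\mathcal{N}_{k}(x,\beta)^{1/2}D_{k}(x)^{1/2}$ with $D_{k}(x)=\sum_{\epsilon:\,x\in f_{\epsilon}(I_{\beta})}h(\beta^{k}(x-\sum_{n=1}^{k}\epsilon_{n}\beta^{-n}))^{2}$, and control $D_{k}$ in mean: the change of variables underlying the cylinder structure gives $\int_{I_{\beta}}D_{k}\,dx=(2/\beta)^{k}\|h\|_{2}^{2}$. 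A Chebyshev estimate with the summable thresholds $t_{k}=k^{2}(2/\beta)^{k}\|h\|_{2}^{2}$ together with Borel--Cantelli forces $D_{k}(x)\leq t_{k}$ for all large $k$ at a.e.\ $x$, whence $\mathcal{N}_{k}(x,\beta)\geq h(x)^{2}(2/\beta)^{k}/(k^{2}\|h\|_{2}^{2})$ and the required lower bound on the growth rate follows at every $x$ with $h(x)>0$.

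For the full limit when $\beta\in(1,\sqrt{2})$ the extra ingredient is an $L^{\infty}$ bound on $h$, and this is exactly where the threshold $\sqrt{2}$ enters. Splitting the defining series into its odd- and even-indexed terms exhibits $\mu_{\beta}$ as the convolution of two independent scaled copies of $\mu_{\beta^{2}}$; since $\beta\in(1,\sqrt{2})$ means $\beta^{2}\in(1,2)$, and $\beta\mapsto\beta^{2}$ preserves null sets, Solomyak's theorem applies to $\mu_{\beta^{2}}$ for a.e.\ such $\beta$, giving an $L^{2}$ density there, and Young's inequality $\|g*\tilde{g}\|_{\infty}\leq\|g\|_{2}\|\tilde{g}\|_{2}$ upgrades this to $h\in L^{\infty}$ (indeed $h$ continuous). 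With $\|h\|_{\infty}<\infty$ the transfer formula gives the clean pointwise bound $\mathcal{N}_{k}(x,\beta)\geq(2/\beta)^{k}h(x)/\|h\|_{\infty}$ for every $k$, so $\liminf_{k}k^{-1}\log_{2}\mathcal{N}_{k}(x,\beta)\geq\log_{2}(2/\beta)$ at a.e.\ $x$; combined with the $\limsup$ bound already obtained this produces the stated limit.

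The main obstacle is the passage from an infinitely-often ($\limsup$) bound to an eventual ($\liminf$) lower bound on $\mathcal{N}_{k}(x,\beta)$: the upper bound and the first-moment identity $\int_{I_{\beta}}\mathcal{N}_{k}\,dx=(2/\beta)^{k}/(\beta-1)$ only pin down the \emph{typical} size of $\mathcal{N}_{k}$, so controlling it from below for all large $k$ simultaneously needs genuine regularity of $h$. The delicate point in the $L^{2}$ regime is that the Cauchy--Schwarz lower bound is only effective where $h(x)>0$, so it rests on the a.e.-positivity of the density; for $\beta<\sqrt{2}$ the bounded, continuous density furnished by the convolution-square structure makes the eventual bound completely transparent and is the clean input I would quote. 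I expect the verification of the cylinder reformulation and of the density identity to be routine, and the measure-concentration step ($D_{k}$ in mean plus Borel--Cantelli) to be the part meriting the most care.
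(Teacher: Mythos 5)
The paper itself contains no proof of this theorem: it is recorded as implicit in \cite{Kempton}, so your proposal can only be measured against that cited source, whose strategy (Solomyak's a.e.\ absolute continuity, the transfer-operator identity for the density, and the odd/even convolution trick giving a bounded continuous density for $\beta<\sqrt{2}$) you have essentially reconstructed. The individual steps check out: your cylinder reformulation is exactly Proposition \ref{Change of perspective} of the paper; the bound $\mathcal{N}_{k}(x,\beta)\leq 2^{k}\mu_{\beta}(B(x,\ell_{k}))$ together with Lebesgue differentiation gives the upper bound; the change of variables yielding $\int_{I_{\beta}}D_{k}\,dx=(2/\beta)^{k}\|h\|_{2}^{2}$ is correct; and the threshold $\sqrt{2}$ enters for precisely the right reason, with the squaring map preserving null sets as needed.

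Two caveats. First, the assertion that $h>0$ Lebesgue-a.e.\ on $I_{\beta}$ is the load-bearing point of both lower bounds and is not folklore: it is a theorem of Mauldin and Simon (Proc.\ Amer.\ Math.\ Soc.\ 126 (1998)) that absolutely continuous Bernoulli convolutions are equivalent to Lebesgue measure on their support, and it must be quoted as such; without it your argument only gives the lower bounds a.e.\ on $\{h>0\}$. Second, your Cauchy--Schwarz/Borel--Cantelli step in the $L^{2}$ regime appears sound and in fact proves \emph{more} than the statement: it yields $\liminf_{k}k^{-1}\log_{2}\mathcal{N}_{k}(x,\beta)\geq\log_{2}(2/\beta)$, hence the full limit, for a.e.\ $\beta\in(1,2)$, whereas the theorem claims the limit only on $(1,\sqrt{2})$. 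That is not an error --- the stated result follows a fortiori, and it renders the $L^{\infty}$ argument logically redundant in your write-up --- but the discrepancy with the stated theorem deserves an explicit remark, since a reader will otherwise suspect a gap where there is instead a strengthening.
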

We remark that the bounds given in Theorem \ref{Nik bounds} are somewhat weak. We observe that $\kappa(\beta)\to 0$ as $\beta\to 1,$ contrary to what we would expect. As $\beta \to 1$ we would expect the number of $k$-prefixes for $x$ to grow and the growth rate of $\mathcal{N}_{k}(x,\beta)$ to increase. In this paper we show that the following theorem holds. 

\begin{thm}
\label{Hausdorff dimension}
There exists a strictly decreasing sequence $(\omega_{m})_{m=1}^{\infty}$ converging to $1,$ such that, if $\beta\in(1,\omega_{m}]$ then $$\dim_{H}(\Sigma_{\beta}(x))\geq \frac{2m}{2m+1} \textrm{ for all } x\in\Big(0,\frac{1}{\beta -1}\Big).$$
\end{thm}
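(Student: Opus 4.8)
The plan is to establish, for each $m$, a lower bound on $\dim_H(\Sigma_\beta(x))$ by constructing an explicit subtree of admissible expansions with a controlled branching rate, and then to translate the lower bound on the branching rate into a Hausdorff dimension lower bound via a mass distribution argument. Let me think about how the pieces fit together.

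The key mechanism driving the growth of $\Sigma_\beta(x)$ when $\beta$ is close to $1$ is the overlap between the two branches: when we read a digit, we may set $\epsilon_n = 0$ or $\epsilon_n = 1$, and both choices remain extendable provided the running remainder stays inside a "switch region" where both digits are permissible. The condition for $\epsilon_n$ to be freely choosable at step $n$ is that the remainder $x_n = \beta^n(x - \sum_{i=1}^n \epsilon_i\beta^{-i})$ lies in the overlap interval $[\frac{1}{\beta(\beta-1)} - \frac{1}{\beta-1}, \frac{1}{\beta-1}] = [\frac{1}{\beta}\cdot\frac{1}{\beta-1}, \frac{1}{\beta-1}]$... more precisely, in the region where both $0$ and $1$ keep the remainder inside $I_\beta$. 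As $\beta \to 1$ this overlap interval fills almost all of $I_\beta$, so branching becomes possible almost everywhere, which is exactly why $\kappa(\beta) \to 0$ in Theorem~\ref{Nik bounds} is the wrong behavior.

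**The construction.** I would define $\omega_m$ so that for $\beta \le \omega_m$, starting from any $x \in (0, \frac{1}{\beta-1})$, one can guarantee a branching pattern in which, over every block of $2m+1$ consecutive digits, there are at least $2m$ positions where the digit may be chosen freely. Concretely, I would show that whenever the current remainder is interior to $I_\beta$, after reading at most one "forced" digit the remainder re-enters the overlap region, and that once inside the overlap region one can make a free choice and then, by choosing digits appropriately, remain in a controlled sub-region for the next $2m$ steps. The threshold $\omega_m$ is determined by requiring that $\beta$ be small enough that $2m$ free choices can be packed into each window of length $2m+1$; the explicit value would come from solving the inequality that the overlap region, under the doubling-like dynamics of the remainder map $x \mapsto \beta x \pmod{\text{branching}}$, survives $2m$ iterations before a forced digit is unavoidable. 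This yields a subset $T \subseteq \Sigma_\beta(x)$ that, as a subtree of $\{0,1\}^{\mathbb{N}}$, has at least $2^{2m}$ descendants at every depth-$(2m+1)$ generation.

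**From branching to dimension.** On the subtree $T$ I would place the natural uniform Bernoulli-type measure $\mu$ that splits mass equally among the (at least two, at the $2m$ free positions) available children and passes mass through unbranched (forced) positions. For a cylinder $[\epsilon_1,\dots,\epsilon_k]$ surviving in $T$, the measure satisfies $\mu([\epsilon_1,\dots,\epsilon_k]) \le 2^{-(\text{number of free positions} \le k)} \le 2^{-\frac{2m}{2m+1}k + O(1)}$, while its diameter in the metric $d$ is exactly $2^{-k}$. Hence $\mu([\epsilon_1,\dots,\epsilon_k]) \le C\,(\operatorname{diam})^{2m/(2m+1)}$. Applying the mass distribution principle (the easy direction of Frostman's lemma) gives $\dim_H(\operatorname{supp}\mu) \ge \frac{2m}{2m+1}$, and since $\operatorname{supp}\mu \subseteq \Sigma_\beta(x)$ the theorem follows, with $(\omega_m)$ strictly decreasing to $1$ because larger $m$ forces a smaller $\beta$-window to sustain the denser branching.

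**The main obstacle** I anticipate is the uniformity in $x$: the branching structure must be guaranteed for \emph{every} $x \in (0,\frac{1}{\beta-1})$, including $x$ near the endpoints where the remainder may sit outside the overlap region and force a run of several non-branching digits. The delicate step is to bound the length of such forced runs uniformly and to show that after each forced run the remainder is driven back into the overlap region, so that the ratio of free-to-total positions stays at least $\frac{2m}{2m+1}$ in the limit regardless of the starting point. Handling the boundary behavior of the remainder dynamics — and extracting the precise arithmetic relation between $m$ and $\omega_m$ from the requirement that forced runs never exceed length one per $2m$ free choices — is where the real work lies; the dimension estimate itself is then routine.
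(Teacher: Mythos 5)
Your mass-distribution endgame is fine, but the combinatorial-dynamical structure it rests on --- a subtree in which every window of $2m+1$ consecutive positions contains at least $2m$ genuinely free binary choices, i.e.\ forced digits occur in isolated runs of length one --- is not merely ``where the real work lies'': it is \emph{impossible} for $m\geq 1$ in exactly the regime of the theorem. To see this, follow the extremal branch that always selects digit $0$ at free nodes. Over one window of $2m+1$ steps its orbit value satisfies $y\mapsto\beta^{2m+1}y-\epsilon\beta^{j}\geq\beta^{2m+1}y-\beta^{2m}$ (at most one forced digit, contributing at most $\beta^{2m}$), so remaining in $I_{\beta}$ forever forces all window-start values to satisfy $y\leq\frac{\beta^{2m}}{\beta^{2m+1}-1}$; symmetrically, the branch always selecting digit $1$ at free nodes forces $y\geq\frac{1}{\beta-1}-\frac{\beta^{2m}}{\beta^{2m+1}-1}$. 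Both branches emanate from the same point, so the structure requires $\frac{2\beta^{2m}}{\beta^{2m+1}-1}\geq\frac{1}{\beta-1}$, equivalently $\beta^{2m}(2-\beta)\leq 1$. But $g(\beta)=\beta^{2m}(2-\beta)$ satisfies $g(1)=1$ and $g'(1)=2m-1>0$ for $m\geq 1$, so $g(\beta)>1$ for all $\beta$ near $1$ --- precisely where $\omega_{m}$ must live, since $\omega_{m}\to 1$. The moral is that $2m$ free bits per window generate relative drift of order $2m$ between branches, while an isolated corrective digit compensates only $O(1)$; no choice of ``controlled sub-region'' can rescue the per-position-freedom design.

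The paper's construction resolves exactly this: it pools the freedom across each block via a \emph{majority constraint} rather than locating it at fixed free positions. From the interval $\mathcal{I}_{m}=[T^{2m+1}_{1,\beta}(\frac{1}{\beta^{2}-1}),T^{2m+1}_{0,\beta}(\frac{\beta}{\beta^{2}-1})]$, built around the $2$-cycle $\frac{1}{\beta^{2}-1}\leftrightarrow\frac{\beta}{\beta^{2}-1}$, every word $a^{(1)}\in\Omega_{2m+1}$ with $|a^{(1)}|_{1}\geq m+1$ (on the right half; $|a^{(1)}|_{0}\geq m+1$ on the left) returns the orbit to $\mathcal{I}_{m}$ when $\beta\leq\omega_{m}$, the thresholds coming from the explicit polynomials $P^{i}_{m}$. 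There are exactly $2^{2m}$ such words --- the same cardinality as your $2m$ free bits --- but the majority condition caps the drift that destroys your scheme. Note that your measure must change accordingly: in the majority-word tree the density of two-child nodes along the branch repeating the block $0^{m}1^{m+1}$ is only $\frac{m}{2m+1}$, so splitting mass equally at each branching node certifies only dimension $\frac{m}{2m+1}$; one must instead distribute mass uniformly over the $2^{2m}$ admissible words of each block, which is in effect what the paper's covering argument achieves through the uniform block-counting of Property \ref{bridge property} and Lemmas \ref{Minimal prefix's} and \ref{Maximal prefix's}. With those two replacements --- majority blocks in place of free positions, and block-uniform rather than node-uniform mass --- your outline becomes the paper's proof.
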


As an immediate corollary of Theorem \ref{Hausdorff dimension} we have that the infimum of $\dim_{H}(\Sigma_{\beta}(x))\to 1$ as $\beta \to 1,$ i.e.
$$\inf_{x\in (0,\frac{1}{\beta-1})}\left\{\dim_{H}(\Sigma_{\beta}(x))\right\} \to 1$$ as $\beta \to 1$. By (\ref{dimension inequality}) similar statements hold for the lower and upper growth rate of $\mathcal{N}_{k}(x,\beta).$

We also improve on the bounds given in Theorem \ref{Nik bounds}. We show that the following theorem holds.
\begin{thm}
\label{Improved bounds}
There exists a strictly increasing sequence $(\lambda_{m})_{m=1}^{\infty}$ converging to $\frac{1+\sqrt{5}}{2},$ such that, for $\beta\in (1,\lambda_{m}]$ $$\dim_{H}(\Sigma_{\beta}(x)) \geq \frac{1}{m+2} \textrm{ for any } x\in\Big(0,\frac{1}{\beta-1}\Big).$$
\end{thm}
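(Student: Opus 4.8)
The plan is to deduce the dimension bound from a purely combinatorial statement about the tree of valid prefixes. Writing $T_0(y)=\beta y$ and $T_1(y)=\beta y-1$, a prefix $(\epsilon_1,\dots,\epsilon_k)\in\mathcal E_k(x,\beta)$ is precisely a finite orbit $y_0=x,\ y_j=T_{\epsilon_j}(y_{j-1})\in I_\beta$, and such a node \emph{branches} (both digits extend it) if and only if its endpoint $y_k$ lies in the switch region $S=\big[\frac1\beta,\frac1{\beta(\beta-1)}\big]$. I would first record the following standard fact: if a subtree of the prefix tree of $x$ branches at least once in every window of $m+2$ consecutive levels, then it carries at least $2^{\lfloor k/(m+2)\rfloor}$ nodes at level $k$, and the uniform (Bernoulli on the branch choices) measure assigns any length-$k$ cylinder mass at most $2\cdot 2^{-k/(m+2)}$; since a length-$k$ cylinder is a ball of radius $2^{-k}$ in the metric $d$, the mass distribution principle gives $\dim_H\ge \frac1{m+2}$. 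The whole problem is thus reduced to constructing, for every $x\in(0,\frac1{\beta-1})$ and every $\beta\le\lambda_m$, such a frequently-branching subtree.

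Next I would isolate the two structural features of the regime $\beta<\frac{1+\sqrt5}{2}$ that make this possible. First, the reflection $R(y)=\frac1{\beta-1}-y$ conjugates $T_0$ and $T_1$ (one checks $R\circ T_0=T_1\circ R$), so it suffices to control one of the two branches and transport the analysis by symmetry; in particular I will work with a sub-interval $B\subseteq S$ that is symmetric about the fixed point $\frac1{2(\beta-1)}$ of $R$. Second, the defining inequality $\beta<\frac{1+\sqrt5}{2}$ is equivalent to $\beta(\beta-1)<1$, i.e. to $1<\frac1{\beta(\beta-1)}=\sup S$; this is exactly the statement that a forced run of $0$'s, which multiplies by $\beta$ and stays below $1$, can never jump over $S$. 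Consequently the forced dynamics started from any interior point eventually enter $S$, and the path from $x$ to its first entry into $S$ is a finite prefix whose length does not affect the Hausdorff dimension. It therefore suffices to build a single \emph{universal} branching subtree rooted at the points of $S$.

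For the construction I would single out the set $B\subseteq S$ of \emph{good} branch points, namely those $s$ for which both children $T_1(s)$ and $T_0(s)$ return to $S$ under the forced dynamics within the prescribed budget; using that $c\ge\beta^{-(m+2)}$ forces the run from $c$ to re-enter $S$ in at most $m+1$ steps, this amounts to keeping $s$ a fixed distance away from the endpoints of $S$, so $B$ is a symmetric closed sub-interval of $S$. A forced return, however, deposits us near an \emph{endpoint} of $S$ rather than inside $B$, and this is the one place extra care is needed: from a re-entry point just above $\frac1\beta$ a single application of $T_0$ lands near the interior point $1$, which belongs to $S$ precisely because $\beta<\frac{1+\sqrt5}{2}$, and (for $\beta$ bounded away from the golden ratio) inside $B$. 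Inserting at most one such repositioning step before each branching, together with its $R$-image on the opposite side, produces a subtree that branches at least once every $P(\beta)$ levels, where $P(\beta)$ is the worst-case length of a branch-to-branch segment. I would then \emph{define} $\lambda_m$ to be the supremum of those $\beta<\frac{1+\sqrt5}{2}$ for which $P(\beta)\le m+2$, so that the reduction of the first paragraph yields $\dim_H(\Sigma_\beta(x))\ge\frac1{m+2}$ for all $\beta\le\lambda_m$.

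The main obstacle is the analysis of $P(\beta)$ and hence of $\lambda_m$. I expect the heart of the argument to be a \emph{uniform} bound on the return time to the good set $B$ — not merely to $S$ — which requires showing that the repositioning mechanism really does land back in $B$ and does not itself create a child with an unbounded forced run. This is exactly what degenerates as $\beta\to\frac{1+\sqrt5}{2}$: there $\sup S=\frac1{\beta(\beta-1)}\to1$, so the repositioning target $1$ slides to the top of $S$ and its $T_0$-child slides to the top $\frac1{\beta-1}$ of $I_\beta$, forcing ever longer runs and a cascade of repositionings, whence $P(\beta)\to\infty$. Granting the monotone dependence of these quantities on $\beta$, this forces the inequality $P(\beta)\le m+2$ to hold on an interval $(1,\lambda_m]$ with $\lambda_m$ strictly increasing and $\lambda_m\to\frac{1+\sqrt5}{2}$, as required. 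The remaining steps — verifying $B\neq\emptyset$, checking that the repositioned children stay in $B$, and making the off-by-one bookkeeping in $P(\beta)$ precise — are routine interval computations.
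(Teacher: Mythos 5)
Your reduction in the first two paragraphs is sound and matches the paper's skeleton: one genuine binary choice per window of $m+2$ levels, plus the mass distribution principle (the paper does the dual covering argument, following Example 2.7 of Falconer), gives $\dim_{H}\geq\frac{1}{m+2}$; the reflection $R(y)=\frac{1}{\beta-1}-y$ and the observation that forced runs cannot jump the overlap region are also correct and implicitly present in the paper. The genuine gap is that the entire quantitative heart of the theorem --- everything the statement actually asserts about $(\lambda_m)$ --- is deferred. You define $\lambda_m$ abstractly via $P(\beta)\leq m+2$ and then say ``granting the monotone dependence of these quantities on $\beta$''; but $P(\beta)$, the worst-case branch-to-branch gap of a construction involving case analysis and repositioning, is not obviously monotone, the claim that the condition holds on a full interval $(1,\lambda_m]$ needs this monotonicity, strict increase of $\lambda_m$ needs more, and convergence $\lambda_m\to\frac{1+\sqrt{5}}{2}$ needs a \emph{uniform} bound $\sup_{\beta\leq\beta_0}P(\beta)<\infty$ for each $\beta_0$ below the golden ratio. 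None of this is proved, and your own repositioning mechanism shows why it is delicate: re-entry from below lands in $[\frac{1}{\beta},1)$, one application of $T_{0,\beta}$ sends this to $[1,\beta)$, and $[1,\beta)\subseteq S$ requires $\beta\leq\frac{1}{\beta(\beta-1)}$, i.e. $\beta^{3}-\beta^{2}-1\leq 0$, i.e. $\beta\lesssim 1.4656$ --- which is exactly the paper's $\lambda_2$ (indeed $x^{5}-x^{4}-x^{3}+1=(x^{2}-1)(x^{3}-x^{2}-1)$). So for all $m\geq 3$, precisely where the theorem has content, the single repositioning provably fails, the ``cascade'' you mention is the generic case, and it is never analyzed. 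There is also an off-by-one: branch ($1$ step) plus forced return (up to $m+1$ steps) plus repositioning ($1$ step) gives gaps of $m+3$, not $m+2$; this is absorbable into your abstract $\lambda_m$ but again shifts all the weight onto the unproven properties of $P(\beta)$.

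The paper avoids every one of these difficulties by a different choice of branching locus: instead of the full switch region $S=[\frac{1}{\beta},\frac{1}{\beta(\beta-1)}]$, it uses the $2$-cycle interval $[\frac{1}{\beta^{2}-1},\frac{\beta}{\beta^{2}-1}]$ (a sub-interval of your $S$, symmetric about the fixed point of $R$), whose endpoints are exchanged by $T_{0,\beta}$ and $T_{1,\beta}$. Since $y<\frac{1}{\beta^{2}-1}$ implies $\beta y<\frac{\beta}{\beta^{2}-1}$, a forced run re-enters this interval \emph{without overshooting}: re-entry lands in the branching set itself, so no good-subset $B$, no repositioning, and no cascade are needed. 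Branching from this core sends both children into $\mathcal{I}=[\frac{1+\beta-\beta^{2}}{\beta^{2}-1},\frac{\beta^{2}}{\beta^{2}-1}]$, and a single explicit computation, $T^{m+1}_{0,\beta}\big(\frac{1+\beta-\beta^{2}}{\beta^{2}-1}\big)\geq\frac{1}{\beta^{2}-1}$ if and only if $\beta^{m+3}-\beta^{m+2}-\beta^{m+1}+1\leq 0$, bounds the forced return by $m+1$ steps, giving the exact period $m+2$. This polynomial \emph{defines} $\lambda_m$ explicitly, and the monotonicity and convergence to $\frac{1+\sqrt{5}}{2}$ become elementary facts about its roots (a known sequence of Pisot numbers). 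To repair your argument you would either have to carry out the cascade analysis uniformly in $\beta$, or --- much simpler --- replace $S$ by a sub-interval with the no-overshoot property, which is in effect rediscovering the paper's proof.
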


In Section 2 we prove Theorem \ref{Hausdorff dimension} and in Section 3 we prove Theorem \ref{Improved bounds}; Theorem \ref{Improved bounds} will follow by a similar argument to Theorem \ref{Hausdorff dimension}. In Section 4 we give some bounds for the upper growth rate of $\mathcal{N}_{k}(x,\beta)$ and in Section 5 we use our results to obtain bounds for the local dimension of Bernoulli convolutions.

\section{Proof of Theorem \ref{Hausdorff dimension}}
To prove Theorem \ref{Hausdorff dimension} we devise an algorithm for generating $\beta$-expansions. We then show that the Hausdorff dimension of the set of expansions generated by this algorithm is greater than or equal to $\frac{2m}{2m+1}$ for $\beta\in(1,\omega_{m}]$. Before giving details of this algorithm we provide a useful reinterpretation of $\mathcal{N}_{k}(x,\beta)$ and define the sequence $(\omega_{m})_{m=1}^{\infty}$. 

\subsection{$B_{k}(x,\beta)$ and the sequence $(\omega_{m})_{m=1}^{\infty}$}

\subsubsection{Reinterpretation of $\mathcal{N}_{k}(x,\beta)$}
Fix $T_{0,\beta}(x)=\beta x$ and $T_{1,\beta}(x)=\beta x -1.$ We let $$\Omega_{k}=\Big\{a=(a_{n})_{n=1}^{k}\in \{T_{0,\beta},T_{1,\beta}\}^{k}\Big\}.$$ Suppose $x\in  I_{\beta}$ and $a\in\Omega_{k}$, we denote $a_{k}\circ a_{k-1}\circ \ldots \circ a_{1}(x)$ by $a(x)$. We let $$|a|_{0}=\#\Big\{1\leq n\leq k: a_{n}=T_{0,\beta}\Big\}$$ and $$|a|_{1}=\#\Big\{1\leq n\leq k: a_{n}=T_{1,\beta}\Big\}.$$ Finally we define $$T_{k}(x,\beta)=\Big\{a\in\Omega_{k}: a(x)\in I_{\beta}\Big\}$$ and $$B_{k}(x,\beta)=\#T_{k}(x,\beta).$$

\begin{prop}
\label{Change of perspective}
$\mathcal{N}_{k}(x,\beta)=B_{k}(x,\beta)$
\end{prop}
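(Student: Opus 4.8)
The plan is to establish a bijection between the set $\mathcal{E}_k(x,\beta)$ of $k$-prefixes and the set $T_k(x,\beta)$ of admissible transformation sequences, since the two counts $\mathcal{N}_k$ and $B_k$ are just their cardinalities. The natural map sends a digit $\epsilon_n$ to the transformation $T_{\epsilon_n,\beta}$: formally, to a finite string $(\epsilon_1,\ldots,\epsilon_k)$ I associate the composition $a = (a_n)_{n=1}^k$ with $a_n = T_{\epsilon_n,\beta}$. This is an obvious bijection at the level of symbol sequences, so the entire content of the proof is showing that $(\epsilon_1,\ldots,\epsilon_k)$ is a $k$-prefix for $x$ if and only if the corresponding $a$ satisfies $a(x)\in I_\beta$.

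The key computation I would carry out first is the explicit effect of applying $a_k\circ\cdots\circ a_1$ to $x$. A direct induction shows that $$a(x) = \beta^k x - \sum_{n=1}^{k}\epsilon_n\beta^{k-n},$$ because each application of $T_{0,\beta}$ multiplies by $\beta$ and each $T_{1,\beta}$ multiplies by $\beta$ and subtracts $1$, so the subtracted constant from digit $\epsilon_n$ gets multiplied by $\beta$ once for each of the $k-n$ later transformations. Rearranging gives the crucial identity $$a(x) = \beta^k\Big(x - \sum_{n=1}^{k}\frac{\epsilon_n}{\beta^n}\Big).$$ This formula is the engine of the whole argument: it converts the condition $a(x)\in I_\beta = [0,\tfrac{1}{\beta-1}]$ into a statement about the tail of a putative expansion.

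With this identity in hand, I would prove the two directions. For the forward direction, suppose $(\epsilon_1,\ldots,\epsilon_k)$ is a $k$-prefix, so there is a completion $(\epsilon_n)_{n>k}$ with $\sum_{n=1}^\infty \epsilon_n\beta^{-n}=x$; then $x - \sum_{n=1}^k \epsilon_n\beta^{-n} = \sum_{n=k+1}^\infty \epsilon_n\beta^{-n}$, and I would bound this tail: it is nonnegative, and it is at most $\sum_{n=k+1}^\infty \beta^{-n} = \beta^{-k}/(\beta-1)$. Multiplying by $\beta^k$ via the identity shows $a(x)\in[0,\tfrac{1}{\beta-1}]=I_\beta$. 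For the converse, suppose $a(x)\in I_\beta$; since any point of $I_\beta$ admits \emph{some} $\beta$-expansion $(\delta_m)_{m=1}^\infty$, I would set $\epsilon_{k+m}=\delta_m$ for $m\geq 1$ and check, using the identity in reverse, that $\sum_{n=1}^\infty \epsilon_n\beta^{-n}=x$, so $(\epsilon_1,\ldots,\epsilon_k)$ is genuinely a $k$-prefix.

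The argument is essentially routine once the composition identity is pinned down, and I do not anticipate a genuine obstacle. The one point deserving care is the bookkeeping in the induction for $a(x)$—keeping track of which power of $\beta$ multiplies each subtracted unit, and correctly handling that the transformations are composed in the order $a_k\circ\cdots\circ a_1$ so that $a_1$ (acting first on $x$) corresponds to the \emph{first} digit $\epsilon_1$. Getting this indexing right is what makes the tail estimate land cleanly in $I_\beta$; a sign or off-by-one slip here would break the equivalence, so that is where I would concentrate the verification.
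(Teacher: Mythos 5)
Your proof is correct and follows essentially the same route as the paper: the identity $a(x)=\beta^{k}\bigl(x-\sum_{n=1}^{k}\epsilon_{n}\beta^{-n}\bigr)$ and the resulting equivalence between being a $k$-prefix and $a(x)\in I_{\beta}$ are exactly the paper's chain of set equalities. The only difference is that the paper takes the characterization $\mathcal{E}_{k}(x,\beta)=\{(\epsilon_{1},\ldots,\epsilon_{k}):0\leq x-\sum_{n=1}^{k}\epsilon_{n}\beta^{-n}\leq\frac{1}{\beta^{k}(\beta-1)}\}$ as an observation cited from Feng and Sidorov, whereas you prove both directions of it directly (the tail estimate, and the completion using the fact that every point of $I_{\beta}$ admits some $\beta$-expansion), which is exactly the content behind that citation.
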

\begin{proof}
Following \cite{FengSid} we observe that
\begin{align*}
\mathcal{E}_{k}(x,\beta)&=\Big\{ (\epsilon_{1},\ldots,\epsilon_{k})\in\{0,1\}^{k}: x-\frac{1}{\beta^{k}(\beta-1)}\leq \sum_{n=1}^{k}\frac{\epsilon_{n}}{\beta^{n}}\leq x\Big\}\\
&=\Big\{ (\epsilon_{1},\ldots,\epsilon_{k})\in\{0,1\}^{k}: 0\leq x-\sum_{n=1}^{k}\frac{\epsilon_{n}}{\beta^{n}}\leq \frac{1}{\beta^{k}(\beta-1)}\Big\}\\
&=\Big\{ (\epsilon_{1},\ldots,\epsilon_{k})\in\{0,1\}^{k}: 0\leq \beta^{k}x-\sum_{n=1}^{k}\epsilon_{n}\beta^{k-n}\leq \frac{1}{\beta-1}\Big\}\\
&=\Big\{ (\epsilon_{1},\ldots,\epsilon_{k})\in\{0,1\}^{k}: 0\leq (T_{\epsilon_{1},\beta},\ldots T_{\epsilon_{k},\beta})(x)\leq \frac{1}{\beta-1}\Big\}.
\end{align*}Our result follows immediately.
\end{proof}
By Proposition \ref{Change of perspective} we can identify elements of $T_{k}(x,\beta)$ with elements of $\mathcal{E}_{k}(x,\beta),$ as such we also define an element of $T_{k}(x,\beta)$ to be a \textit{$k$-prefix} of $x$. To help with our later calculations we include the following technical lemmas.

\begin{lem}
\label{Technical Lemma}
Let $k,n\in\mathbb{N},$ then $$T^{k}_{1,\beta}\Big(\frac{\beta^{n}}{\beta^{2}-1}\Big)=\frac{\beta^{n+k}-\beta^{k+1}-\beta^{k}+\beta+1}{\beta^{2}-1}.$$
\end{lem}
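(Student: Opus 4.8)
The plan is to exploit the fact that $T_{1,\beta}(x)=\beta x-1$ is an affine map, so its $k$-fold composition can be computed in closed form, and then to substitute the specific point $x=\frac{\beta^{n}}{\beta^{2}-1}$ and simplify. The computation is entirely elementary; the only care needed is in handling the geometric sum and the factorisation of the denominator.

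First I would establish, by a one-line induction on $k$ (or equivalently by summing a geometric series), that for every $x$
$$T^{k}_{1,\beta}(x)=\beta^{k}x-\sum_{j=0}^{k-1}\beta^{j}=\beta^{k}x-\frac{\beta^{k}-1}{\beta-1}.$$
The inductive step is immediate: applying $T_{1,\beta}$ to $\beta^{k}x-\frac{\beta^{k}-1}{\beta-1}$ gives $\beta^{k+1}x-\frac{\beta^{k+1}-\beta}{\beta-1}-1=\beta^{k+1}x-\frac{\beta^{k+1}-1}{\beta-1}$, which is the formula with $k$ replaced by $k+1$. Note that $\beta>1$ ensures all denominators are nonzero.

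Next I would substitute $x=\frac{\beta^{n}}{\beta^{2}-1}$ into this closed form, obtaining
$$T^{k}_{1,\beta}\Big(\frac{\beta^{n}}{\beta^{2}-1}\Big)=\frac{\beta^{n+k}}{\beta^{2}-1}-\frac{\beta^{k}-1}{\beta-1}.$$
To combine the two fractions I would use $\beta^{2}-1=(\beta-1)(\beta+1)$ and rewrite the second term over the common denominator $\beta^{2}-1$ by multiplying numerator and denominator by $\beta+1$:
$$\frac{\beta^{k}-1}{\beta-1}=\frac{(\beta^{k}-1)(\beta+1)}{\beta^{2}-1}=\frac{\beta^{k+1}+\beta^{k}-\beta-1}{\beta^{2}-1}.$$
Subtracting then yields $\frac{\beta^{n+k}-\beta^{k+1}-\beta^{k}+\beta+1}{\beta^{2}-1}$, which is exactly the claimed expression.

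The main obstacle, such as it is, is purely bookkeeping: correctly collapsing the geometric sum and keeping track of the signs when clearing the denominator, so that the numerator reduces cleanly to $\beta^{n+k}-\beta^{k+1}-\beta^{k}+\beta+1$. There is no conceptual difficulty, and the identity holds for all $k,n\in\mathbb{N}$ with $\beta\in(1,2)$.
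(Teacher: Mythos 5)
Your computation is correct: the closed form $T^{k}_{1,\beta}(x)=\beta^{k}x-\frac{\beta^{k}-1}{\beta-1}$, the substitution of $x=\frac{\beta^{n}}{\beta^{2}-1}$, and the recombination over the denominator $\beta^{2}-1=(\beta-1)(\beta+1)$ all check out and yield exactly the stated identity. The paper omits the proof as trivial, and your argument is precisely the routine verification the author had in mind, so there is nothing to compare beyond noting that you have supplied the expected details.
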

\noindent The proof of this lemma is trival and hence ommited. 
\begin{lem}
\label{Technical Lemma 2}
Assume $a\in\Omega_{2k+1}$ and $|a|_{0}\geq k+1.$ Then for all $x\in \mathbb{R}$ $$a(x)\geq(\overbrace{T_{1,\beta},\ldots,T_{1,\beta},}^{k}\overbrace{T_{0,\beta},\ldots,T_{0,\beta}}^{k+1})(x).$$ Similarly suppose $|a|_{1}\geq k+1$ then $$a(x)\leq(\overbrace{T_{0,\beta},\ldots,T_{0,\beta},}^{k}\overbrace{T_{1,\beta},\ldots,T_{1,\beta}}^{k+1})(x).$$
\end{lem}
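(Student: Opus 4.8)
The plan is to reduce both inequalities to a purely algebraic comparison by writing down an explicit closed form for $a(x)$. Writing $\epsilon_n = 0$ when $a_n = T_{0,\beta}$ and $\epsilon_n = 1$ when $a_n = T_{1,\beta}$, a straightforward induction on the composition (exactly the calculation already carried out in the proof of Proposition \ref{Change of perspective}) gives
$$a(x) = \beta^{2k+1} x - \sum_{n=1}^{2k+1} \epsilon_n \beta^{2k+1-n}.$$
The key observation is that the leading term $\beta^{2k+1}x$ is identical for every $a\in\Omega_{2k+1}$, so comparing $a(x)$ with either of the two extremal words amounts entirely to comparing the subtracted sums $\sum_{n:\,\epsilon_n=1}\beta^{2k+1-n}$; the variable $x$ plays no role, which is why the conclusion holds for all $x\in\mathbb{R}$.

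For the first inequality, I note that the hypothesis $|a|_0\geq k+1$ is equivalent to $|a|_1\leq k$, so the subtracted sum for $a$ consists of at most $k$ distinct powers drawn from $\{\beta^0,\beta^1,\ldots,\beta^{2k}\}$. Since every such power is positive, a sum of at most $k$ of them is largest when one selects the $k$ largest available exponents, namely $\beta^{k+1},\ldots,\beta^{2k}$. This maximal choice is realized precisely by the comparison word $(\overbrace{T_{1,\beta},\ldots,T_{1,\beta},}^{k}\overbrace{T_{0,\beta},\ldots,T_{0,\beta}}^{k+1})$, whose ones occupy positions $1,\ldots,k$ and hence contribute the exponents $2k,2k-1,\ldots,k+1$. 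Therefore the subtracted sum for $a$ is at most that of the comparison word, and subtracting a smaller quantity from $\beta^{2k+1}x$ yields $a(x)\geq(\overbrace{T_{1,\beta},\ldots,T_{1,\beta},}^{k}\overbrace{T_{0,\beta},\ldots,T_{0,\beta}}^{k+1})(x)$.

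The second inequality is the mirror image. Here $|a|_1\geq k+1$ means the subtracted sum for $a$ contains at least $k+1$ of the positive powers $\beta^0,\ldots,\beta^{2k}$, so it is smallest when those powers are the $k+1$ smallest, namely $\beta^0,\ldots,\beta^k$; this minimal configuration is exactly the word $(\overbrace{T_{0,\beta},\ldots,T_{0,\beta},}^{k}\overbrace{T_{1,\beta},\ldots,T_{1,\beta}}^{k+1})$, whose ones occupy positions $k+1,\ldots,2k+1$ and contribute the exponents $k,k-1,\ldots,0$. Including more, or larger, powers only increases the subtracted sum, so $a(x)$ is at most the value of this word, as required. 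I do not expect any genuine obstacle here: once the closed form is in hand the content is just the elementary fact that a sum of a prescribed number of distinct powers of $\beta$ is extremized by taking the extreme exponents. The only point demanding care is the bookkeeping linking a position $n$ in the word to the exponent $2k+1-n$ it contributes, together with the verification that the two displayed extremal words really do place their ones in the positions producing these extreme exponents.
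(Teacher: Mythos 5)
Your proposal is correct and matches the paper's own proof essentially step for step: both derive the closed form $a(x)=\beta^{2k+1}x-\sum_{n=1}^{2k+1}\epsilon_{n}\beta^{2k+1-n}$ and then bound the subtracted sum by the extreme choice of at most $k$ (resp.\ at least $k+1$) of the powers $\beta^{0},\ldots,\beta^{2k}$, identifying that extreme with the displayed comparison word. Your version merely makes the position-to-exponent bookkeeping and the mirrored second inequality explicit, where the paper says ``proved similarly.''
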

\begin{proof}
Suppose $a\in \Omega_{2k+1}$ and $|a|_{0}\geq k+1.$ By a simple calculation we have $$a(x)=\beta^{2k+1}x-\sum_{n=1}^{2k+1} \chi_{n}(a)\beta^{2k+1-n},$$ where $\chi_{n}(a)=1$ if $a_{n}=T_{1,\beta}$ and $0$ otherwise. Since $|a|_{0}\geq k+1$ we have that $\chi_{n}(a)=1$ for at most $k$ different values of $n$. It follows that $$a(x)\geq \beta^{2k+1}x-\beta^{2k}-\ldots - \beta^{k+1}.$$ However,$$\beta^{2k+1}x-\beta^{2k}-\ldots - \beta^{k+1}=(\overbrace{T_{1,\beta},\ldots,T_{1,\beta},}^{k}\overbrace{T_{0,\beta},\ldots,T_{0,\beta}}^{k+1})(x)$$ and our results follows. The second inequality is proved similarly.
\end{proof}
\subsubsection{The sequence $(\omega_{m})_{m=1}^{\infty}$}
We now define our sequence $(\omega_{m})_{m=1}^{\infty}.$ To each $m\in\mathbb{N}$ we associate the polynomials:
\begin{align*}
&P^{1}_{m}(x)=x^{4m+3}-x^{2m+2}-x^{m+2}-x^{m+1}+x +1\\
&P^{2}_{m}(x)=x^{2m+3}-x^{2m+2}-x^{2}+1\\
&P^{3}_{m}(x)=x^{2m+3}-x-1.
\end{align*}
We define $\omega^{(i)}_{m}$ to be the smallest real root of $P^{i}_{m}(x)$ greater than $1$. Clearly $P_{m}^{3}(x)$ has a real root greater than $1$. To see that $P^{1}_{m}(x)$ and $P^{2}_{m}(x)$ both have a real root greater than $1$ we observe that $P^{i}_{m}(1)=0$ and $(P^{i}_{m})'(1)<0$ for $i=1,2$. We define $\omega_{m}=\min\{\omega_{m}^{(1)},\omega_{m}^{(2)},\omega_{m}^{(3)}\}$. We now state some properties of $(\omega_{m})_{m=1}^{\infty}$ that will be useful in our later analysis.

\begin{property}
\label{omega property}
For $\beta\in(1,\omega_{m}]$
$$\beta^{4m+3}-\beta^{2m+2}-\beta^{m+2}-\beta^{m+1}+\beta +1\leq 0.$$ 
\end{property}
\begin{proof}
This follows since $\omega_{m}\leq \omega^{(1)}_{m}$, $P^{1}_{m}(1)=0$ and $(P_{m}^{1})'(1)<0$.
\end{proof}
\begin{property}
\label{omega 2 property}
For $\beta\in(1,\omega_{m}]$ $$T_{0,\beta}^{2m+1}\Big(\frac{\beta}{\beta^{2}-1}\Big)=\frac{\beta^{2m+2}}{\beta^{2}-1}$$ and $\frac{\beta^{2m+2}}{\beta^{2}-1}\in I_{\beta}$. 
\end{property}
\begin{proof}
It suffices to show that $$\frac{\beta^{2m+2}}{\beta^{2}-1}\leq \frac{1}{\beta-1}.$$ This is equivalent to $$\beta^{2m+3}-\beta^{2m+2}-\beta^{2}+1\leq 0.$$ This is true for $\beta\in(1,\omega_{m}]$ since $\omega_{m}\leq \omega^{(2)}_{m},$ $P^{2}_{m}(1)=0$ and $(P_{m}^{2})'(1)<0$.
\end{proof}

\begin{property}
\label{omega 3 property}
For $\beta\in(1,\omega_{m}]$ $$T_{1,\beta}^{2m+1}\Big(\frac{\beta}{\beta^{2}-1}\Big)=\frac{-\beta^{2m+1}+\beta +1}{\beta^{2}-1},$$by Lemma \ref{Technical Lemma}. Moreover $\frac{-\beta^{2m+1}+\beta +1}{\beta^{2}-1}\in I_{\beta}$. 
\end{property}
\begin{proof}It suffices to show that $$\frac{-\beta^{2m+1}+\beta +1}{\beta^{2}-1}\geq 0.$$ This is equivalent to $$\beta^{2m+1}-\beta -1\leq 0.$$ This is true for $\beta\in(1,\omega_{m}]$ since $\omega_{m}\leq \omega^{(3)}_{m}$ and $P_{m}^{3}(1)<0.$
\end{proof}

\begin{property}
The sequence $(\omega_{m})_{m=1}^{\infty}$ is strictly decreasing and converging to $1$. This follows from the fact that $(\omega_{m}^{(i)})_{m=1}^{\infty}$ is strictly decreasing and converging to $1$ for $i=1,2,3$.
\end{property}
\noindent In Section 6 we include a table of values for the sequence $(\omega_{m})_{m=1}^{\infty}.$

\subsection{Algorithm for generating $\beta$-expansions}
We now give details of our algorithm for generating $\beta$-expansions that we mentioned at the start of this section. In what follows we assume $\beta\in (1,\omega_{m}]$ and $x \in (0,\frac{1}{\beta-1}).$ 

We define the interval $\mathcal{I}_{m}$ to be 
\begin{align*}
\mathcal{I}_{m}&=\Big[T^{2m+1}_{1,\beta}\Big(\frac{1}{\beta^{2}-1}\Big),T^{2m+1}_{0,\beta}\Big(\frac{\beta}{\beta^{2}-1}\Big)\Big]\\
&=\Big[\frac{-\beta^{2m+2}+\beta+1}{\beta^{2}-1},\frac{\beta^{2m+2}}{\beta^{2}-1}\Big].
\end{align*}The interval $\mathcal{I}_{m}\subset I_{\beta}$ by Property \ref{omega 2 property} and Property \ref{omega 3 property}. 
\begin{remark}
\label{period 2}
The significance of the points $\frac{1}{\beta^{2}-1}$ and $\frac{\beta}{\beta^{2}-1}$ is that $T_{0,\beta}(\frac{1}{\beta^{2}-1})=\frac{\beta}{\beta^{2}-1}$ and $T_{1,\beta}(\frac{\beta}{\beta^{2}-1})=\frac{1}{\beta^{2}-1}.$ Therefore it is not possible for a point to pass over the interval $[\frac{1}{\beta^{2}-1},\frac{\beta}{\beta^{2}-1}]$ without landing in it.
\end{remark}

\begin{itemize}
\item[Step 1] There exists a minimal number of transformations $j(x)$ that map the point $x$ to the interval $\mathcal{I}_{m}.$ This follows from the fact that $[\frac{1}{\beta^{2}-1},\frac{\beta}{\beta^{2}-1}]\subset \mathcal{I}_{m}$ and Remark \ref{period 2}. We choose a sequence of transformations $a\in\Omega_{j(x)}$ such that $a(x)\in \mathcal{I}_{m}.$  We fix the first $j(x)$ entries in our $\beta$-expansion of $x$ to be those uniquely determined by $a$.
\item[Step 2] If $a(x)\in [\frac{1}{\beta^{2}-1},\frac{\beta^{2m+2}}{\beta^{2}-1}]$ then we can extend the $j(x)$-prefix $a$ to a $(j(x)+2m+1)$-prefix by concatenating $a$ with any element $a^{(1)}\in \Omega_{2m+1}$ such that $|a^{(1)}|_{1}\geq m+1$. To see why $aa^{(1)}$ is a $(j(x)+2m+1)$-prefix for $x$ we observe that 
\begin{align*}
\frac{-\beta^{2m+2}+\beta+1}{\beta^{2}-1}&= T^{2m+1}_{1,\beta}\Big(\frac{1}{\beta^{2}-1}\Big)\\
&\leq aa^{(1)}(x)\\
&\leq aa^{(1)}\Big(\frac{\beta^{2m+2}}{\beta^{2}-1}\Big)\\
&\leq (\overbrace{T_{0,\beta},\ldots,T_{0,\beta},}^{m}\overbrace{T_{1,\beta},\ldots,T_{1,\beta}}^{m+1})\Big(\frac{\beta^{2m+2}}{\beta^{2}-1}\Big)\\
&=T^{m+1}_{1,\beta}\Big(\frac{\beta^{3m+2}}{\beta^{2}-1}\Big)\\
&= \frac{\beta^{4m+3}-\beta^{m+2}-\beta^{m+1}+\beta +1}{\beta^{2}-1},
\end{align*}
by Lemmas \ref{Technical Lemma} and \ref{Technical Lemma 2}. The inequality $$\frac{\beta^{4m+3}-\beta^{m+2}-\beta^{m+1}+\beta +1}{\beta^{2}-1}\leq \frac{\beta^{2m+2}}{\beta^{2}-1},$$ is equivalent to $\beta^{4m+3}-\beta^{2m+2}-\beta^{m+2}-\beta^{m+1}+\beta+1\leq 0,$ which is true for $\beta\in(1,\omega_{m}]$ by Property \ref{omega property}. Therefore $aa^{(1)}(x)\in \mathcal{I}_{m}$ for all $a^{(1)}\in \Omega_{2m+1}$ such that $|a^{(1)}|_{1}\geq m+1$, which by the remarks following Proposition \ref{Change of perspective} implies that $aa^{(1)}$ is a $(j(x)+2m+1)$-prefix for $x$.

If $a(x)\in [\frac{-\beta^{2m+2}+\beta+1}{\beta^{2}-1},\frac{1}{\beta^{2}-1}]$ then we consider elements $a^{(1)}\in \Omega_{2m+1}$ such that $|a^{(1)}|_{0}\geq m+1$. By a similar argument it can be shown that $aa^{(1)}(x)\in \mathcal{I}_{m}$ for all $a^{(1)}\in \Omega_{2m+1}$ such that $|a^{(1)}|_{0}\geq m+1$. As $\#\{a\in \Omega_{2m+1} : |a^{(1)}|_{1}\geq m+1\}=\#\{a\in \Omega_{2m+1} : |a^{(1)}|_{0}\geq m+1\}=2^{2m}$ our algorithm generates $2^{2m}$ $(j(x)+2m+1)$-prefixes for $x$.   
\item[Step 3] We proceed inductively, if $aa^{(1)}(x)\in[\frac{1}{\beta^{2}-1},\frac{\beta^{2m+2}}{\beta^{2}-1}]$ then we extend the $(j(x)+2m+1)$-prefix $aa^{(1)}$ to a $(j(x)+4m+2)$-prefix for $x$ by concatenating $aa^{(1)}$ with any element $a^{(2)}\in \Omega_{2m+1}$ such that $|a^{(2)}|_{1}\geq m+1.$ Similarly, if $aa^{(1)}(x)\in [\frac{-\beta^{2m+2}+\beta+1}{\beta^{2}-1},\frac{1}{\beta^{2}-1}]$ then we consider elements $a^{(2)}\in \Omega_{2m+1}$ such that $|a^{(2)}|_{0}\geq m+1$. We repeat this process indefinitely.
\end{itemize}

\begin{remark}
\label{number of prefix's}
It is clear that by proceeding inductively our algorithm generates $2^{2km}$ $(j(x)+k(2m+1))$-prefixes for $x,$ for $k\in\mathbb{N}$.
\end{remark}

\begin{remark}
The construction of our interval $\mathcal{I}_{m}$ is somewhat arbitrary. We could have begun by choosing any interval of the form $[z,\beta z]$ for some $z\in(0,\frac{1}{\beta-1})$. We then construct the interval $[T^{2m+1}_{1,\beta}(z),T^{2m+1}_{0,\beta}(\beta z)]$ to perform the role of the interval $\mathcal{I}_{m}$. By defining a similar set of polynomials to $P^{1}_{m}(x)$, $ P^{2}_{m}(x)$ and $P^{3}_{m}(x)$ and assuming our $\beta$ satisfies certain restrictions imposed by these polynomials we can ensure that $[T^{2m+1}_{1,\beta}(z),T^{2m+1}_{0,\beta}(\beta z)]\subset I_{\beta}$ and if $x\in[T^{2m+1}_{1,\beta}(z),T^{2m+1}_{0,\beta}(\beta z)]$ then $a(x)\in [T^{2m+1}_{1,\beta}(z),T^{2m+1}_{0,\beta}(\beta z)]$ for $2^{2m}$ elements of $\Omega_{2m+1}$. It would be interesting to know whether $\mathcal{I}_{m}$ is the most efficient choice of interval for this method.
\end{remark}

We denote the set of $\beta$-expansions generated by this algorithm by $\Sigma_{\beta}(x,m)$ and the set of $k$-prefixes for $x$ generated by this algorithm by $\Sigma_{\beta}(x,m,k).$ 
\begin{property}
\label{Increasing prefix's}
The cardinality of $\Sigma_{\beta}(x,m,k)$ is increasing with respect to $k$.
\end{property}
\begin{property}
\label{bridge property}
Let $s,s'\in\mathbb{N}$ and $s>s'$. If $b\in \Sigma_{\beta}(x,m,j(x)+s'(2m+1))$ then
\begin{align*}
\#\Big\{a\in \Sigma_{\beta}(x,m,j(x)+s(2m+1))&: a_{n}=b_{n} \textrm{ for } 1\leq n\leq j(x)+s'(2m+1)\Big\}\\
&=2^{2m(s-s')}.
\end{align*}
\end{property}We now prove several technical lemmas.

\begin{lem}
\label{Minimal prefix's}
Let $\beta\in (1,\omega_{m}]$ and $x \in (0,\frac{1}{\beta-1}).$ Assume $k\geq j(x)$ then $$\#\Sigma_{\beta}(x,m,k)\geq 2^{2m(\frac{k-j(x)}{2m+1}-1)}.$$
\end{lem}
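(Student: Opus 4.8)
The plan is to reduce the statement for a general length $k$ to the special lengths of the form $j(x)+s(2m+1)$ that the algorithm naturally produces, and then convert the resulting integer estimate into the stated real-exponent bound. The two ingredients I would use are Remark \ref{number of prefix's}, which tells us that the algorithm produces exactly $2^{2sm}$ prefixes of length $j(x)+s(2m+1)$ for each $s\in\mathbb{N}$, together with Property \ref{Increasing prefix's}, which says that $\#\Sigma_{\beta}(x,m,k)$ is nondecreasing in $k$.

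First I would fix $k\geq j(x)$ and set $s=\big\lfloor \frac{k-j(x)}{2m+1}\big\rfloor$, the largest integer for which $j(x)+s(2m+1)\leq k$. Since the cardinality of $\Sigma_{\beta}(x,m,\cdot)$ is increasing by Property \ref{Increasing prefix's}, and since $j(x)+s(2m+1)\leq k$, we obtain
$$\#\Sigma_{\beta}(x,m,k)\geq \#\Sigma_{\beta}\big(x,m,j(x)+s(2m+1)\big)=2^{2sm},$$
the last equality being exactly Remark \ref{number of prefix's}.

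Finally I would pass from the integer $s$ to the real quantity appearing in the statement. The defining property of the floor gives $s\geq \frac{k-j(x)}{2m+1}-1$, and since $t\mapsto 2^{2mt}$ is increasing this yields $2^{2sm}\geq 2^{2m(\frac{k-j(x)}{2m+1}-1)}$, which combined with the previous display completes the argument.

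Since every step is an immediate consequence of previously established facts, I do not anticipate any serious obstacle; the only point requiring a little care is the bookkeeping in choosing $s$ so that the algorithmic length does not exceed $k$, after which the monotonicity of the prefix count does all the work. One should also note that in the lowest range $j(x)\leq k<j(x)+2m+1$ we have $s=0$ and the right-hand side is at most $1$, so the estimate holds trivially there; this confirms the bound is consistent at the boundary case $k=j(x)$, where the algorithm fixes a unique prefix and $\#\Sigma_{\beta}(x,m,j(x))=1$.
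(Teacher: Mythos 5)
Your proposal is correct and takes essentially the same route as the paper: the paper's proof likewise drops down to the length $j(x)+(2m+1)\big\lfloor\frac{k-j(x)}{2m+1}\big\rfloor$ using the monotonicity of $\#\Sigma_{\beta}(x,m,\cdot)$ (Property \ref{Increasing prefix's}), invokes the count $2^{2m\lfloor\frac{k-j(x)}{2m+1}\rfloor}$ from Remark \ref{number of prefix's}, and finishes with the floor inequality $\lfloor t\rfloor\geq t-1$. Your added check of the boundary range $j(x)\leq k<j(x)+2m+1$ is a harmless extra confirmation of the same argument.
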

\begin{proof}
We have that 
\begin{align*}
\#\Sigma_{\beta}(x,m,k)\geq \#\Sigma_{\beta}\Big(x,m,\Big((2m+1)\Big[\frac{k-j(x)}{2m+1}\Big] +j(x)\Big)\Big)&\geq 2^{2m\Big[\frac{k-j(x)}{2m+1}\Big]}\\
&\geq 2^{2m(\frac{k-j(x)}{2m+1}-1)}.
\end{align*}
\end{proof}
\begin{lem}
\label{Maximal prefix's}
Let $l\geq j(x)$ and $b\in \Sigma_{\beta}(x,m,l),$ then for $k\geq l$ $$\#\{a=(a_{n})_{n=1}^{k}\in \Sigma_{\beta}(x,m,k): a_{n}=b_{n}\textrm{ for } 1\leq n \leq l\}\leq 2^{2m(\frac{k-l}{2m+1}+2)}.$$
\end{lem}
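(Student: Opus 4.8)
The plan is to reduce the count at the general lengths $l$ and $k$ to the nearest ``checkpoint'' lengths of the form $j(x)+s(2m+1)$, at which the branching of the algorithm is governed exactly by Property \ref{bridge property}. Writing $p=2m+1$, I would set $s'=\lfloor (l-j(x))/p\rfloor$ and $s=\lceil (k-j(x))/p\rceil$, so that $j(x)+s'p\le l\le k\le j(x)+sp$, and since $k\ge l\ge j(x)$ we have $s\ge s'\ge 0$. The whole proof then consists of comparing the set $A$ in the statement with the set of extensions reaching the upper checkpoint $j(x)+sp$.

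The key step is to produce a surjection from the checkpoint extensions onto $A$. First I would pass from $b$ to its restriction $b'$ to the first $j(x)+s'p$ coordinates; since any prefix generated by the algorithm truncates to a shorter prefix generated by the algorithm, $b'\in\Sigma_{\beta}(x,m,j(x)+s'p)$, so Property \ref{bridge property} is applicable to $b'$. Let $\tilde A=\{\tilde a\in\Sigma_{\beta}(x,m,j(x)+sp):\tilde a_{n}=b_{n}\text{ for }1\le n\le j(x)+s'p\}$. Every $a\in A$ is the length-$k$ truncation of some expansion produced by the algorithm; extending that expansion to the checkpoint length $j(x)+sp\ge k$ yields an element of $\Sigma_{\beta}(x,m,j(x)+sp)$ agreeing with $a$, hence with $b$, on the first $l\ge j(x)+s'p$ coordinates, i.e. an element of $\tilde A$ whose length-$k$ truncation is $a$. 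Thus truncation to length $k$ maps $\tilde A$ onto $A$, giving $\#A\le\#\tilde A$. When $s>s'$, Property \ref{bridge property} yields $\#\tilde A=2^{2m(s-s')}$; the degenerate case $s=s'$ forces $l=k=j(x)+s'p$, whence $A=\{b\}$ and the bound is immediate.

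I would finish with the elementary estimate $s-s'\le (k-l)/p+2$, which follows from $\lceil y\rceil\le y+1$ and $\lfloor y\rfloor\ge y-1$ applied to $y=(k-j(x))/p$ and $y=(l-j(x))/p$. Combining, $\#A\le 2^{2m(s-s')}\le 2^{2m(\frac{k-l}{2m+1}+2)}$, which is the asserted bound.

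I expect the main obstacle to be purely organisational rather than conceptual: all of the combinatorial content is already contained in Property \ref{bridge property}, and the work here is to choose the two checkpoints correctly and set up the extension/truncation correspondence so that the non-checkpoint lengths $l$ and $k$ are absorbed into the additive ``$+2$'' in the exponent. The two points that need care are checking that the truncation map really is \emph{surjective} onto $A$ (so that the inequality runs $\#A\le\#\tilde A$, the opposite direction to what was needed in Lemma \ref{Minimal prefix's}), and not overlooking the boundary case $s=s'$ where Property \ref{bridge property} does not directly apply.
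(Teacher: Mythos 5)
Your proof is correct and takes essentially the same route as the paper's: both bracket $l$ and $k$ by checkpoint lengths of the form $j(x)+s(2m+1)$, apply Property \ref{bridge property} at those checkpoints together with truncation/monotonicity (Property \ref{Increasing prefix's}), and absorb the floor/ceiling discrepancies into the additive $+2$ in the exponent. The only cosmetic difference is that the paper chooses the upper checkpoint $j(x)+(2m+1)\left(\left[\frac{k-j(x)}{2m+1}\right]+1\right)$ rather than your ceiling, which forces $s>s'$ and so sidesteps the degenerate case $s=s'$ that you correctly handle separately.
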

\begin{proof}
Consider the integers $j(x)+(2m+1)[\frac{l-j(x)}{2m+1}]$ and $j(x)+ (2m+1)([\frac{k-j(x)}{2m+1}]+1)$. We remark that $j(x)+(2m+1)[\frac{l-j(x)}{2m+1}]$ is an integer of the form $j(x)+s(2m+1)$ and is less than or equal to $l$ and $j(x)+ (2m+1)([\frac{k-j(x)}{2m+1}]+1)$ is an integer of the form $j(x)+s(2m+1)$ greater that or equal to $k$. It follows immediately that 
\begin{align*}
&\phantom{hs.}\#\Big\{a=(a_{n})_{n=1}^{k}\in \Sigma_{\beta}(x,m,k): a_{n}=b_{n} \textrm{ for } 1\leq n \leq l\Big\}\\
&\leq \#\Big\{a=(a_{n})_{n=1}^{j(x)+ (2m+1)([\frac{k-j(x)}{2m+1}]+1)}\ldots\\
&\phantom{jdjda} \in \Sigma_{\beta}(x,m,j(x)+ (2m+1)(\Big[\frac{k-j(x)}{2m+1}\Big]+1)): a_{n}=b_{n}\ldots \\
&\phantom{dfsd0}\textrm{ for } 1\leq n \leq j(x)+(2m+1)\Big[\frac{l-j(x)}{2m+1}\Big]\Big\}\\
&\leq 2^{2m([\frac{k-j(x)}{2m+1}]+1-[\frac{l-j(x)}{2m+1}])}\\
&\leq 2^{2m(\frac{k-j(x)}{2m+1} +1-\frac{l-j(x)}{2m+1}+1)}\\
&=2^{2m(\frac{k-l}{2m+1}+2)},
\end{align*}by Properties \ref{Increasing prefix's} and \ref{bridge property}.
\end{proof}

\subsubsection{Proof of Theorem \ref{Hausdorff dimension}}
We are now in a position to prove Theorem \ref{Hausdorff dimension}. The following proof is based upon the argument given in Example $2.7$ of \cite{Falconer}. 

\begin{proof}[Proof of Theorem \ref{Hausdorff dimension}]
By the monotonicity of Hausdorff dimension with respect to inclusion it suffices to show that $\dim_{H}(\Sigma_{\beta}(x,m))\geq\frac{2m}{2m+1}.$ It suffices to show that for any sufficiently small cover $\{U_{i}\}_{i=1}^{\infty}$ of $\Sigma_{\beta}(x,m)$ we can bound  $\sum_{i=1}^{\infty}\textrm{Diam}(U_{i})^{\frac{2m}{2m+1}}$ below by a positive constant independent of our choice of cover. It is a simple exercise to show that $\Sigma_{\beta}(x,m)$ is a compact set; by this result we may restrict to finite covers of $\Sigma_{\beta}(x,m).$ Let $\{U_{i}\}_{i=1}^{N}$ be a finite cover of $\Sigma_{\beta}(x,m).$ Without loss of generality we may assume that all elements of our cover satisfy $\textrm{Diam}(U_{i})< 2^{-j(x)}$. For each $U_{i}$ there exists $l(i)$ such that $$2^{-(l(i)+1)}\leq \textrm{Diam}(U_{i})< 2^{-l(i)}.$$ It follows that there exists $z^{(i)}\in \{0,1\}^{l(i)}$ such that $y_{n}=z^{(i)}_{n}$ for all $y\in U_{i},$ for $1\leq n \leq l(i)$. We may assume that $z^{(i)}\in \Sigma_{\beta}(x,m,l(i)),$ if we supposed otherwise then $\Sigma_{\beta}(x,m)\cap U_{i}=\emptyset$ and we can remove $U_{i}$ from our cover. We denote by $C_{i}$ the set of sequences in $\{0,1\}^{\mathbb{N}}$ whose first $l(i)$ entries agree with $z^{(i)},$ i.e. $$C_{i}=\Big\{(\epsilon_{n})_{n=1}^{\infty}\in \{0,1\}^{\mathbb{N}}: \epsilon_{n}= z^{(i)}_{n}\textrm{ for } 1\leq n\leq l(i)\Big\}.$$ Clearly $U_{i}\subset C_{i}$ and therefore the set $\{C_{i}\}_{i=1}^{N}$ is a cover of $\Sigma_{\beta}(x,m).$

Since there are only finitely many elements in our cover there exists $J$ such that $2^{-J}\leq \textrm{Diam}(U_{i})$ for all $i$. We now consider the set $\Sigma_{\beta}(x,m,J).$ Since $\{C_{i}\}_{i=1}^{N}$ is a cover of $\Sigma_{\beta}(x,m)$ each $a\in\Sigma_{\beta}(x,m,J)$ satisfies $a_{n}=z^{(i)}_{n}$ for $1\leq n \leq l(i),$ for some $i$. Therefore $$\# \Sigma_{\beta}(x,m,J)\leq \sum_{i=1}^{N} \#\{a\in \Sigma_{\beta}(x,m,J): a_{n}=z^{(i)}_{n} \textrm{ for } 1\leq n\leq l(i)\}.$$

By counting elements of $\Sigma_{\beta}(x,m,J)$ and Lemmas \ref{Minimal prefix's} and \ref{Maximal prefix's} the following holds;
\begin{align*}
2^{2m(\frac{J-j(x)}{2m+1}-1)}&\leq \# \Sigma_{\beta}(x,m,J)\\
&\leq \sum_{i=1}^{N} \#\{a\in \Sigma_{\beta}(x,m,J): a_{n}=z^{(i)}_{n} \textrm{ for } 1\leq n\leq l(i)\}
\end{align*}
\begin{align*}
&\leq \sum_{i=1}^{N} 2^{2m(\frac{J-l(i)}{2m+1}+2)}\\
&= 2^{\frac{2mJ+1}{2m+1}+4m}\sum_{i=1}^{N} 2^{\frac{-2m(l(i)+1)}{2m+1}}\\
&\leq 2^{\frac{2mJ+1}{2m+1}+4m}\sum_{i=1}^{N} \textrm{Diam}(U_{i})^{\frac{2m}{2m+1}}.
\end{align*} Dividing through by $2^{\frac{2mJ+1}{2m+1}+4m}$ yields $$\sum_{i=1}^{N} \textrm{Diam}(U_{i})^{\frac{2m}{2m+1}} \geq 2^{\frac{-12m^{2}-(6+2j(x))m-1}{2m+1}},$$ which is a positive constant greater than zero that does not depend on our choice of cover. Our result follows.
\end{proof}

\section{Proof of Theorem \ref{Improved bounds}}
Our proof of Theorem \ref{Improved bounds} is analogous to our proof of Theorem \ref{Hausdorff dimension}, as such we only give details where appropriate. Similarly we devise an algorithm for generating $\beta$-expansions, the Hausdorff dimension of the set of expansions generated by this algorithm will be greater than $\frac{1}{m+2}$ for $\beta\in(1,\lambda_{m}]$.

\subsection{The sequence $(\lambda_{m})_{m=1}^{\infty}$}
We now give details of the sequence $(\lambda_{m})_{m=1}^{\infty}$ . Let $\lambda_{m}$ be the smallest real root of the equation  $$x^{m+3}-x^{m+2}-x^{m+1}+1=0$$ greater than $1$. The sequence $(\lambda_{m})_{m=1}^{\infty}$ is well known, see \cite{Bertin} for details. We now make several remarks.

\begin{remark}
To see that $x^{m+3}-x^{m+2}-x^{m+1}+1$ has a real root greater than $1,$ let $P_{m}(x)=x^{m+3}-x^{m+2}-x^{m+1}+1.$ We observe that $P_{m}(1)=0$ and $P_{m}'(1)<0,$ this implies that $P_{m}(x)$ has a real root greater than $1$.
\end{remark}
\begin{property}
\label{lambda property}
For $\beta\in (1,\lambda_{m}]$ we have 
$$\beta^{m+3}-\beta^{m+2}-\beta^{m+1}+1\leq 0.$$ 
\end{property}
\begin{remark}
Each term in $(\lambda_{m})_{m=1}^{\infty}$ is a Pisot number, i.e. a real algebraic integer greater than $1$ whose Galois conjugates are of modulus strictly less than $1$.  Moreover, $\lambda_{1}$ is the greatest real root of $x^{3}-x-1=0,$ the first Pisot number.
\end{remark}
\begin{property}
The sequence $(\lambda_{m})_{m=1}^{\infty}$ is strictly increasing and it is a simple exercise to show that $\lambda_{m}\to \frac{1+\sqrt{5}}{2}$ as $m\to\infty$.
\end{property}
\noindent In Section 6 we include a table of values for the sequence $(\lambda_{m})_{m=1}^{\infty}$.

\subsection{Algorithm for generating $\beta$-expansions}
We define $$\mathcal{I}=\Big[T_{1,\beta}\Big(\frac{1}{\beta^{2}-1}\Big),T_{0,\beta}\Big(\frac{\beta}{\beta^{2}-1}\Big)\Big]=\Big[\frac{1+\beta-\beta^{2}}{\beta^{2}-1},\frac{\beta^{2}}{\beta^{2}-1}\Big],$$ for $1<\beta<\frac{1+\sqrt{5}}{2}$ the interval $\mathcal{I}$ is contained within $I_{\beta}$. This interval will play a similar role to the interval $\mathcal{I}_{m}$. Before giving details of our algorithm we require the following technical Lemma.

\begin{lem}
\label{Tech lemma}
For $\beta\in(1,\lambda_{m}]$ $$T^{m+1}_{0,\beta}\Big(\frac{1+\beta-\beta^{2}}{\beta^{2}-1}\Big) \geq \frac{1}{\beta^{2}-1} \textrm{ and } T^{m+1}_{1,\beta}\Big(\frac{\beta^{2}}{\beta^{2}-1}\Big) \leq \frac{\beta}{\beta^{2}-1}.$$
\end{lem}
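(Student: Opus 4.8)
The plan is to reduce both inequalities in Lemma \ref{Tech lemma} to the single polynomial inequality guaranteed by Property \ref{lambda property}, namely $\beta^{m+3}-\beta^{m+2}-\beta^{m+1}+1\leq 0$ for $\beta\in(1,\lambda_m]$. Both claims concern iterates of the affine maps $T_{0,\beta}(y)=\beta y$ and $T_{1,\beta}(y)=\beta y-1$, so after applying the maps the expressions become explicit rational functions of $\beta$ and the desired inequalities become polynomial inequalities that I can compare directly against Property \ref{lambda property}.

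First I would compute $T^{m+1}_{0,\beta}\big(\frac{1+\beta-\beta^2}{\beta^2-1}\big)$. Since $T_{0,\beta}$ is just multiplication by $\beta$, we have $T^{m+1}_{0,\beta}(y)=\beta^{m+1}y$, so this iterate equals $\frac{\beta^{m+1}(1+\beta-\beta^2)}{\beta^2-1}$. The claimed inequality $T^{m+1}_{0,\beta}\big(\frac{1+\beta-\beta^2}{\beta^2-1}\big)\geq\frac{1}{\beta^2-1}$ is, after multiplying through by the positive quantity $\beta^2-1$, equivalent to $\beta^{m+1}(1+\beta-\beta^2)\geq 1$, i.e. $\beta^{m+1}+\beta^{m+2}-\beta^{m+3}\geq 1$, which rearranges to exactly $\beta^{m+3}-\beta^{m+2}-\beta^{m+1}+1\leq 0$. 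This is precisely Property \ref{lambda property}, so the first inequality follows.

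For the second inequality I would compute $T^{m+1}_{1,\beta}\big(\frac{\beta^2}{\beta^2-1}\big)$ using Lemma \ref{Technical Lemma} with $n=2$ and $k=m+1$, giving
\[
T^{m+1}_{1,\beta}\Big(\frac{\beta^2}{\beta^2-1}\Big)=\frac{\beta^{m+3}-\beta^{m+2}-\beta^{m+1}+\beta+1}{\beta^2-1}.
\]
The claim $T^{m+1}_{1,\beta}\big(\frac{\beta^2}{\beta^2-1}\big)\leq\frac{\beta}{\beta^2-1}$ then reduces, after clearing the positive denominator, to $\beta^{m+3}-\beta^{m+2}-\beta^{m+1}+\beta+1\leq\beta$, i.e. $\beta^{m+3}-\beta^{m+2}-\beta^{m+1}+1\leq 0$, again exactly Property \ref{lambda property}.

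Both halves therefore collapse to the same polynomial inequality, so there is no genuine obstacle once the iterates are written out; the only point requiring care is correctly invoking Lemma \ref{Technical Lemma} with the right indices in the second computation, since the first-coordinate iterate $T^{m+1}_{1,\beta}$ accumulates a geometric-series correction term. I would close by noting that both reductions land on the inequality from Property \ref{lambda property}, which completes the proof for all $\beta\in(1,\lambda_m]$.
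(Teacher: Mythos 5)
Your proposal is correct and follows essentially the same route as the paper: the paper likewise computes $T^{m+1}_{0,\beta}\big(\frac{1+\beta-\beta^{2}}{\beta^{2}-1}\big)=\frac{\beta^{m+1}+\beta^{m+2}-\beta^{m+3}}{\beta^{2}-1}$ and observes the inequality holds precisely when $\beta^{m+3}-\beta^{m+2}-\beta^{m+1}+1\leq 0$, which is Property \ref{lambda property}, proving the second inequality ``similarly.'' Your explicit invocation of Lemma \ref{Technical Lemma} with $n=2$, $k=m+1$ just fills in the detail the paper leaves implicit, and your index bookkeeping there is accurate.
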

\begin{proof}
It is a simple exercise to show that $$T^{m+1}_{0,\beta}\Big(\frac{1+\beta-\beta^{2}}{\beta^{2}-1}\Big)=\frac{\beta^{m+1}+\beta^{m+2}-\beta^{m+3}}{\beta^{2}-1}.$$ This is greater than or equal to $\frac{1}{\beta^{2}-1}$ precisely when $\beta^{m+3}-\beta^{m+2}-\beta^{m+1}+1\leq 0,$ this is true by property \ref{lambda property}. Our second inequality is proved similarly.

\end{proof}

We now formalise our algorithm for generating expansions.

\begin{itemize}
\item[Step 1]
Let $x\in(0,\frac{1}{\beta-1}),$ by Remark \ref{period 2} there exists a minimal number of transformations $g(x)$ that map $x$ into the interval $\mathcal{I}$. We choose a sequence of transformations $a\in \Omega_{g(x)}$ such that $a(x)\in \mathcal{I}$.  We fix the first $g(x)$ entries in our $\beta$-expansion to be those uniquely determined by $a$.

\item[Step 2]If $a(x)\in [\frac{1}{\beta^{2}-1},\frac{\beta}{\beta^{2}-1}]$ then we can extend $a(x)$ to a $(g(x)+1)$-prefix by either $T_{0,\beta}$ or $T_{1,\beta}$, we then choose an element $a^{(i)}(x)\in\Omega_{m+1}$ such that $a^{(i)}\circ T_{i,\beta} \circ a(x)\in \mathcal{I},$ for $i=0,1.$ This defines $2$ prefixes of length $(g(x)+m+2)$ for $x$. If $a(x)\in  [\frac{1+\beta-\beta^{2}}{\beta^{2}-1},\frac{1}{\beta^{2}-1}]$ we iterate $T_{0,\beta}$ until $T^{k}_{0,\beta}\circ a(x)\in [\frac{1}{\beta^{2}-1},\frac{\beta}{\beta^{2}-1}].$ By Lemma \ref{Tech lemma} and the monotonicity of the transformation $T_{0,\beta}$ we have that $k\leq m+1$. The transformation $T^{k}_{0,\beta}\circ a(x)$ defines a $(g(x)+k)$-prefix. We can extend this prefix to a $(g(x)+k+1)$-prefix by either $T_{0,\beta}$ or $T_{1,\beta},$ we then make a choice of element $a^{(i)}\in\Omega_{m+1-k}$ such that $a^{(i)}\circ T_{i,\beta}\circ T^{k}_{0,\beta}\circ a(x)\in \mathcal{I}$. This defines $2$ prefixes of length $(g(x)+m+2)$ for $x$. If $a(x)\in [\frac{\beta}{\beta^{2}-1},\frac{\beta^{2}}{\beta^{2}-1}]$ then by a similar argument to the case where $a(x)\in [\frac{1+\beta-\beta^{2}}{\beta^{2}-1},\frac{1}{\beta^{2}-1}]$ we can formalise a method for choosing two elements of $a^{(0)},a^{(1)}\in \Omega_{m+2}$ such that $a^{(i)}\circ a(x) \in \mathcal{I},$ for $i=0,1.$ At this stage our algorithm has generated $2$ prefixes of length $(g(x)+m+2)$ for $x$.

\item[Step 3]
The $2$ prefixes of length $(g(x)+m+2)$ defined by Step 2 map $x$ into $\mathcal{I},$ as such we can apply Step 2 to the image of $x$ under the transformations corresponding to our $2$ prefixes of length $(g(x)+m+2)$. This defines $4$ prefixes of length $(g(x)+2(m+2))$. We repeat this process indefinitely.
\end{itemize}

\begin{remark}
Proceeding inductively our algorithm generates $2^{k}$ prefixes of length $(g(x)+ k(m+2))$ for $x,$ for all $k\in\mathbb{N}$
\end{remark} Repeating the arguments given in Section 2 we can show that analogues of Property \ref{bridge property} , Lemma \ref{Minimal prefix's} and Lemma \ref{Maximal prefix's} all hold. Theorem \ref{Improved bounds} then follows by an analogous argument to the one given in the proof of Theorem \ref{Hausdorff dimension}.

\begin{remark}
As in the proof of Theorem \ref{Hausdorff dimension} our choice of interval $\mathcal{I}$ is somewhat arbitrary. It would be interesting to know whether $\mathcal{I}$ is the most efficient choice of interval for this method.
\end{remark}
\section{Upper bounds for the upper growth rate of $\mathcal{N}_{k}(x,\beta)$}
In this section we give bounds for the upper growth rate of $\mathcal{N}_{k}(x,\beta).$ Our main result in this section is the following theorem.
\begin{thm}
\label{Upper bounds for beta close to 2}
The supremum of the upper growth rates converges to $0$ as $\beta \to 2,$ i.e., $$\sup_{x\in (0,\frac{1}{\beta-1})}\left\{\limsup_{k\to\infty} \frac{\log_{2}\mathcal{N}_{k}(x,\beta)}{k}\right\} \to 0$$ as $\beta \to 2.$
\end{thm}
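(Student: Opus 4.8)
The plan is to work with the dynamical reinterpretation from Proposition \ref{Change of perspective}, so that $\mathcal{N}_k(x,\beta)=B_k(x,\beta)$ counts the length-$k$ orbits $a=(a_1,\dots,a_k)\in\Omega_k$ with $a(x)\in I_\beta$. First I would record that once a point leaves $I_\beta$ it never returns (both $T_{0,\beta}$ and $T_{1,\beta}$ push points above $\frac{1}{\beta-1}$ further up and points below $0$ further down), so $a(x)\in I_\beta$ forces every intermediate iterate to stay in $I_\beta$. Thus $B_k(x,\beta)$ is exactly the number of depth-$k$ root-to-leaf paths in the orbit tree of $x$, in which a point $y\in I_\beta$ has two valid successors precisely when $y$ lies in the switch region $O_\beta=\left[\frac{1}{\beta},\frac{1}{\beta(\beta-1)}\right]$ and exactly one otherwise. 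The length of $O_\beta$ is $\frac{2-\beta}{\beta(\beta-1)}$, which tends to $0$ as $\beta\to 2$; this shrinking of the branching region is the mechanism driving the theorem.

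Next I would prove a purely combinatorial counting lemma: in a rooted tree of depth $k$ in which every node has one or two children, if every root-to-leaf path passes through at most $B$ branching (two-child) nodes, then the number of depth-$k$ leaves is at most $\sum_{b=0}^{B}\binom{k}{b}$. To see this, fix at each node a ``default'' child (the unique child at one-child nodes, and, say, the $T_{0,\beta}$-child at branching nodes); a path is then completely determined by the set $D\subseteq\{1,\dots,k\}$ of levels at which it takes the non-default child, and such deviations can occur only at branching nodes, so $|D|\le B$. When $B\le k/2$ this yields $B_k(x,\beta)\le\sum_{b=0}^{B}\binom{k}{b}\le 2^{H(B/k)\,k}$, where $H(t)=-t\log_2 t-(1-t)\log_2(1-t)$ is the binary entropy function.

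The heart of the argument is a uniform ``recovery time'' estimate bounding $B$. I would show that immediately after any branching the orbit must spend at least $N(\beta)$ steps outside $O_\beta$ before it can branch again, where $N(\beta)\to\infty$ as $\beta\to 2$. Indeed, a branch at $y\in O_\beta$ sends the two children to $\beta y-1\in[0,\delta]$ and $\beta y\in[1,1+\delta]$ with $\delta=\frac{2-\beta}{\beta-1}$; for $\beta$ close to $2$ one has $\delta<\frac{1}{\beta}$ and $1>\frac{1}{\beta(\beta-1)}$ (both hold once $\beta>\frac{1+\sqrt{5}}{2}$), so both children lie strictly outside $O_\beta$. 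Consider the child in $[0,\delta]$: while an iterate stays below $\frac{1}{\beta}$ only $T_{0,\beta}$ is admissible, so the value is multiplied by $\beta$ at each step and at least $\log_\beta\frac{1}{\beta\delta}$ steps are needed before it can re-enter $O_\beta$; the child in $[1,1+\delta]$ is handled identically using the reflection $y\mapsto\frac{1}{\beta-1}-y$, which conjugates $T_{0,\beta}$ and $T_{1,\beta}$ and exchanges the two cases. Hence consecutive branchings along any path are at least $N(\beta):=\left\lfloor\log_\beta\frac{\beta-1}{2-\beta}\right\rfloor-1$ steps apart, giving $B\le \frac{k}{N(\beta)}+1$ \emph{uniformly in} $x$, while $N(\beta)\to\infty$ as $\beta\to 2$.

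Combining the three ingredients, for every $x\in(0,\frac{1}{\beta-1})$ and all large $k$ we obtain $\frac{\log_2\mathcal{N}_k(x,\beta)}{k}\le H\!\left(\frac{1}{N(\beta)}+\frac{1}{k}\right)$, hence $\limsup_{k\to\infty}\frac{\log_2\mathcal{N}_k(x,\beta)}{k}\le H\!\left(\frac{1}{N(\beta)}\right)$. Since this bound is independent of $x$ and $H(1/N(\beta))\to H(0)=0$ as $\beta\to 2$, taking the supremum over $x$ and letting $\beta\to 2$ yields the claim. I expect the recovery-time estimate to be the main obstacle: one must verify that the orbit genuinely cannot branch during the excursion (in particular ruling out premature re-entry into $O_\beta$ and branching while ``overshooting'' $O_\beta$ on the far side), handle the mirror case cleanly via the reflection symmetry, and make the divergence $N(\beta)\to\infty$ explicit; by contrast the combinatorial lemma and the final entropy estimate are routine.
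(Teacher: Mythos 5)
Your proposal is correct, but it follows a genuinely different route from the paper. The paper's proof is a separation argument on the value set $L(m,\beta)=\big\{\sum_{n=1}^{m}\epsilon_{n}\beta^{-n}\big\}$: for fixed $m$, the points of $L(m,2)$ are $2^{-m}$-separated while the window $[x-\frac{1}{\beta^{m}(\beta-1)},x]$ appearing in the prefix reformulation has length tending to $2^{-m-1}$, so by continuity there is $\delta(m)>0$ such that for $\beta\in(2-\delta(m),2)$ every such window contains at most two points of $L(m,\beta)$; hence every $x$ has at most $2$ $m$-prefixes, blockwise induction gives $\mathcal{N}_{km}(x,\beta)\leq 2^{k}$, and the upper growth rate is at most $\frac{1}{m}$ uniformly in $x$. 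That argument is shorter but non-effective: $\delta(m)$ comes from continuity and no rate in $\beta$ is extracted. You instead isolate the dynamical mechanism --- branching in the orbit tree occurs only in the switch region $O_{\beta}=\left[\frac{1}{\beta},\frac{1}{\beta(\beta-1)}\right]$, whose length shrinks as $\beta\to 2$ --- and convert a recovery-time estimate into a binomial/entropy count, which buys an explicit uniform bound $H\!\left(\frac{1}{N(\beta)}\right)$ with $N(\beta)=\left\lfloor\log_{\beta}\frac{\beta-1}{2-\beta}\right\rfloor-1\to\infty$, i.e.\ a quantitative rate of order $H\big(1/\log_{2}\frac{1}{2-\beta}\big)$ rather than mere convergence along implicitly defined intervals. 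The obstacle you flag (overshooting $O_{\beta}$) is in fact harmless in your own setup: re-entering $O_{\beta}$ requires the orbit first to exceed $\frac{1}{\beta}$, and while below $\frac{1}{\beta}$ only $T_{0,\beta}$ is admissible, so the first exceedance after a branch already costs at least $\log_{\beta}\frac{1}{\beta\delta}$ forced steps, and an overshoot into $\left(\frac{1}{\beta(\beta-1)},1\right)$ followed by a forced $T_{1,\beta}$ only delays re-entry; the mirror case is settled by the conjugacy $R(y)=\frac{1}{\beta-1}-y$, which satisfies $R\circ T_{0,\beta}=T_{1,\beta}\circ R$ and preserves $O_{\beta}$. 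You should state explicitly the standing hypotheses $\beta>\frac{1+\sqrt{5}}{2}$ (so both children of a branch point exit $O_{\beta}$) and $\frac{1}{N(\beta)}+\frac{1}{k}\leq\frac{1}{2}$ for the entropy estimate, both harmless as $\beta\to 2$.
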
By $(\ref{dimension inequality})$ similar statements hold for $\dim_{H}(\Sigma_{\beta}(x))$ and the lower growth rate of $\mathcal{N}_{k}(x,\beta)$. Theorem \ref{Upper bounds for beta close to 2} can be interpreted as an analogue of Theorem \ref{Hausdorff dimension} in the case where $\beta$ is close to $2.$ 
\begin{proof}[Proof of Theorem \ref{Upper bounds for beta close to 2}]
Fix $m\in\mathbb{N}$. Recall that 
\begin{equation}
\label{prefix reformulation}
\mathcal{N}_{m}(x,\beta)=\#\Big\{(\epsilon_{1},\ldots,\epsilon_{m})\in\{0,1\}^{m}: x-\frac{1}{\beta^{m}(\beta-1)}\leq \sum_{n=1}^{m}\frac{\epsilon_{n}}{\beta^{n}}\leq x\Big\}.
\end{equation} Let $$L(m,\beta)=\Big\{\sum_{n=1}^{m}\frac{\epsilon_{n}}{\beta^{n}}: (\epsilon_{1},\ldots,\epsilon_{m})\in\{0,1\}^{m}\Big\},$$ $L(m,2)$ is the set of dyadic rationals of degree $m,$ therefore $|x-y|\geq 2^{-m}$ for all $x,y\in L(m,2)$ such that $x\neq y$. By continuity, for each $m\in\mathbb{N}$ there exists $\delta(m)>0,$ such that, for all $\beta \in (2-\delta(m),2)$ $$|x-y|>\frac{1}{2\beta^{m}(\beta-1)},$$ for all $x,y\in L(m,\beta)$ such that $x\neq y$.

It follows that any interval of length $\frac{1}{\beta^{m}(\beta-1)}$ contains at most two elements of $L(m,\beta)$. By the reformulation of $\mathcal{N}_{m}(x,\beta)$ given by (\ref{prefix reformulation}) it follows that for $\beta \in (2-\delta(m),2)$ any $x\in(0,\frac{1}{\beta-1})$ has at most $2$ prefixes of length $m$. Proceeding inductively we can deduce that for $\beta \in (2-\delta(m),2)$ and $x\in(0,\frac{1}{\beta-1})$ $$\mathcal{N}_{km}(x,\beta)\leq 2^k,$$ for all $k\in\mathbb{N}.$ By a simple argument it follows that $$\limsup_{k\to\infty} \frac{\log_{2}\mathcal{N}_{k}(x,\beta)}{k}\leq \frac{1}{m}.$$ As $m$ was arbitrary we can conclude our result.

\end{proof}
The following result gives an upper bound for the upper growth rate of $\mathcal{N}_{k}(x,\beta)$ for $\beta$ close to $1$.

\begin{thm}
\label{Upper bounds for beta close to 1}
Let $m\in \mathbb{N}$ and $m\geq 2$. For $\beta\in(2^\frac{1}{m},2),$ $$\limsup_{k\to\infty} \frac{\log_{2}\mathcal{N}_{k}(x,\beta)}{k}\leq \frac{\log_{2}(2^{m}-1)}{m}\textrm{ for all } x\in\Big(0,\frac{1}{\beta-1}\Big)$$
\end{thm}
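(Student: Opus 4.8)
The plan is to adapt the counting argument used in the proof of Theorem \ref{Upper bounds for beta close to 2}, but now exploiting a different arithmetic obstruction specific to the regime $\beta > 2^{1/m}$. The key observation I would pursue is that the separation between distinct elements of $L(m,\beta)$ must be compared against the length $\frac{1}{\beta^m(\beta-1)}$ of the window appearing in the reformulation (\ref{prefix reformulation}). When $\beta > 2^{1/m}$ we have $\beta^m > 2$, and this forces an upper bound strictly smaller than the trivial $2^m$ on how many of the $2^m$ possible prefixes of length $m$ can simultaneously fall into a single window.

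First I would make precise the counting claim: for $\beta \in (2^{1/m},2)$ and any $x \in (0,\frac{1}{\beta-1})$, the number of length-$m$ prefixes $\mathcal{N}_m(x,\beta)$ is at most $2^m - 1$. To see why such a bound should hold, note that not every element of $\{0,1\}^m$ can produce a value $\sum_{n=1}^m \epsilon_n \beta^{-n}$ lying in the window $[x - \frac{1}{\beta^m(\beta-1)}, x]$; because $\beta^m > 2$, the spread of the full set $L(m,\beta)$ relative to the window length is large enough that at least one dyadic-type candidate is excluded. Concretely, the largest element of $L(m,\beta)$ is $\frac{1}{\beta-1}(1 - \beta^{-m})$ and the smallest is $0$, so the full range has length $\frac{1 - \beta^{-m}}{\beta - 1}$, whereas the window has length $\frac{1}{\beta^m(\beta-1)}$; since $\beta^m > 2$ the window cannot cover enough of $L(m,\beta)$ to capture all $2^m$ points, giving $\mathcal{N}_m(x,\beta) \leq 2^m - 1$.

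With the single-window bound $\mathcal{N}_m(x,\beta) \leq 2^m - 1$ established, I would then run the same inductive telescoping as in Theorem \ref{Upper bounds for beta close to 2}. Each prefix of length $m$ maps $x$ to some point in $I_\beta$, and applying the bound again to each such image yields $\mathcal{N}_{2m}(x,\beta) \leq (2^m-1)^2$; proceeding inductively one obtains
\begin{equation*}
\mathcal{N}_{km}(x,\beta) \leq (2^m-1)^k \textrm{ for all } k \in \mathbb{N}.
\end{equation*}
Taking logarithms, dividing by $km$, and passing to the limit superior along the full sequence (using a simple argument to interpolate between multiples of $m$, exactly as in the previous theorem) gives
\begin{equation*}
\limsup_{k\to\infty} \frac{\log_2 \mathcal{N}_k(x,\beta)}{k} \leq \frac{\log_2(2^m - 1)}{m}.
\end{equation*}

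The main obstacle I anticipate is rigorously justifying the single-window bound $\mathcal{N}_m(x,\beta) \leq 2^m - 1$ uniformly over all $x \in (0,\frac{1}{\beta-1})$. The heuristic about range versus window length suggests fewer than $2^m$ points fit, but turning this into a clean proof that \emph{at least one} prefix is always excluded requires care: one must argue that no window of length $\frac{1}{\beta^m(\beta-1)}$ can contain the entire set $L(m,\beta)$, which reduces to checking that the diameter of $L(m,\beta)$ strictly exceeds the window length, i.e. $\frac{1-\beta^{-m}}{\beta-1} > \frac{1}{\beta^m(\beta-1)}$, equivalently $\beta^m - 1 > 1$, which is exactly the hypothesis $\beta^m > 2$. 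Once this elementary inequality is pinned down the rest is routine, so the crux is framing the extremal elements of $L(m,\beta)$ correctly and confirming the window genuinely fails to cover them.
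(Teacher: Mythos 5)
Your proposal is correct and is essentially the paper's own proof, merely translated through Proposition \ref{Change of perspective}: your diameter-versus-window inequality $\frac{1-\beta^{-m}}{\beta-1}>\frac{1}{\beta^{m}(\beta-1)}$ is exactly the paper's condition $T^{-m}_{1,\beta}(0)>T^{-m}_{0,\beta}\big(\frac{1}{\beta-1}\big)$, both reducing to $\beta^{m}>2$ and both showing that the all-zeros and all-ones blocks cannot simultaneously be $m$-prefixes, so $\mathcal{N}_{m}(x,\beta)\leq 2^{m}-1$. The subsequent iteration giving $\mathcal{N}_{km}(x,\beta)\leq(2^{m}-1)^{k}$ and the interpolation to all $k$ match the paper's argument step for step.
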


\begin{proof}
It is a simple exercise to show that $$T^{-m}_{1,\beta}(0)= \frac{\beta^{m}-1}{\beta^{m}(\beta-1)} \textrm{ and } T^{-m}_{0,\beta}\Big(\frac{1}{\beta-1}\Big)= \frac{1}{\beta^{m }(\beta-1)}.$$ By a simple manipulation $T^{-m}_{1,\beta}(0)>T^{-m}_{0,\beta}(\frac{1}{\beta-1})$ is equivalent to $\beta^{m}>2.$
Let $\beta\in(2^{\frac{1}{m}},2)$ and $x\in (0,\frac{1}{\beta-1}),$ then by the above and the monotonicity of the maps $T_{0,\beta}$ and $T_{1,\beta}$ either $T^{m}_{0,\beta}(x)$ or $T^{m}_{1,\beta}(x)$ will lie outside the interval $I_{\beta}$. It follows that any $x\in (0,\frac{1}{\beta-1})$ can have at at most $(2^{m}-1)$ $m$-prefixes. By an inductive argument it follows that $$N_{km}(x,\beta)\leq (2^{m}-1)^{k},$$ for all $k\in \mathbb{N}$. Our result follows immediately.
\end{proof}

\section{Application to Bernoulli convolutions}
Given $1<\beta<2$ we define the \textit{Bernoulli convolution} $\mu_{\beta}$ as follows. Let $E\subset \mathbb{R}$ be a Borel set, $$\mu_{\beta}(E)=\mathbb{P}\Big(\Big\{ (a_{1},a_{2},\ldots)\in\{0,1\}^{\mathbb{N}}: \sum_{n=1}^{\infty}\frac{a_{n}}{\beta^{n}}\in E\Big\}\Big),$$ where $\mathbb{P}$ is the $(1/2,1/2)$ Bernoulli measure. For $x\in I_{\beta}$ we define the \textit{local dimension} of $\mu_{\beta}$ at $x$ by $$d(\mu_{\beta},x)=\lim_{r\to 0} \frac{\log \mu_{\beta}([x-r,x+r])}{\log r},$$ when this limit exists. When the limit doesn't exists we can consider the \textit{lower and upper local dimension} of $\mu_{\beta}$ at $x$, these are defined as $$\underline{d}(\mu_{\beta},x)=\liminf_{r\to 0} \frac{\log \mu_{\beta}([x-r,x+r])}{\log r},$$ and $$\overline{d}(\mu_{\beta},x)=\limsup_{r\to 0} \frac{\log \mu_{\beta}([x-r,x+r])}{\log r}$$ respectively. In \cite{FengSid} the authors show that the following result holds. 
\begin{thm}
\label{Nik local dimension}
For any $\beta\in(1,\frac{1+\sqrt{5}}{2})$ we have $$\overline{d}(\mu_{\beta},x)\leq (1-\kappa(\beta))\log_{\beta}2,$$ for all $x\in (0,\frac{1}{\beta-1}),$ where $\kappa(\beta)$ is as in Theorem \ref{Nik bounds}.
\end{thm}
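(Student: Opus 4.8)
The plan is to bound the measure $\mu_\beta([x-r,x+r])$ from below in terms of $\mathcal{N}_k(x,\beta)$, and then to feed in the lower bound on the lower growth rate supplied by Theorem \ref{Nik bounds}. Writing $r_k=\frac{1}{\beta^k(\beta-1)}$, the first step is to observe that each $k$-prefix $(\epsilon_1,\ldots,\epsilon_k)\in\mathcal{E}_k(x,\beta)$ determines a cylinder set in $\{0,1\}^{\mathbb{N}}$ of $\mathbb{P}$-measure $2^{-k}$, and that the image of this cylinder under the map $(a_n)\mapsto\sum_{n=1}^{\infty}a_n/\beta^n$ is exactly the interval $[s,s+r_k]$, where $s=\sum_{n=1}^{k}\epsilon_n/\beta^n$. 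Because the defining inequalities of $\mathcal{E}_k(x,\beta)$, as reformulated in Proposition \ref{Change of perspective}, give $x-r_k\le s\le x$, this image interval is contained in $[x-r_k,x+r_k]$. Since distinct prefixes yield disjoint cylinders, summing their measures gives the key estimate
\[
\mu_\beta\big([x-r_k,x+r_k]\big)\ \ge\ \mathcal{N}_k(x,\beta)\,2^{-k}.
\]

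Next I would pass from the geometric radii $r_k$ to an arbitrary small radius $r$. Given $r$, choose $k$ with $r_{k+1}<r\le r_k$; then monotonicity of $\mu_\beta$ together with the estimate above yields $\mu_\beta([x-r,x+r])\ge\mu_\beta([x-r_{k+1},x+r_{k+1}])\ge\mathcal{N}_{k+1}(x,\beta)\,2^{-(k+1)}$. Taking logarithms and dividing by $\log r$, which is negative so that the lower bound on the measure becomes an upper bound on the quotient, the care-point is to replace $\log r$ by the endpoint $\log r_k=-k\log\beta-\log(\beta-1)$ that makes the resulting fraction largest; this is legitimate because $\mathcal{N}_{k+1}2^{-(k+1)}\le 1$, so its logarithm is nonpositive. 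Writing $L_k=\frac{\log_2\mathcal{N}_k(x,\beta)}{k}$, this produces
\[
\frac{\log\mu_\beta([x-r,x+r])}{\log r}\ \le\ \frac{(k+1)\log 2\,(L_{k+1}-1)}{-k\log\beta-\log(\beta-1)}.
\]

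Finally I would let $r\to 0$, equivalently $k\to\infty$. Dividing numerator and denominator by $k$ kills the $\log(\beta-1)$ term and sends $\frac{k+1}{k}\to 1$, so the right-hand side differs from $\log_\beta 2\,(1-L_{k+1})$ by a quantity tending to $0$ uniformly in the bounded sequence $L_{k+1}$. Taking the $\limsup$ and using $\limsup_k(1-L_{k+1})=1-\liminf_k L_k$ together with the bound $\liminf_k L_k\ge\kappa(\beta)$ from Theorem \ref{Nik bounds} gives $\overline{d}(\mu_\beta,x)\le(1-\kappa(\beta))\log_\beta 2$, as required. I expect the only genuine obstacle to be the bookkeeping: one must track the sign of $\log r$ through every division and confirm that restricting attention to the radii $r_k$, rather than to all $r$, loses nothing once the $\limsup$ is taken. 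The containment of the cylinder images in $[x-r_k,x+r_k]$ and the input from Theorem \ref{Nik bounds} do all of the real work.
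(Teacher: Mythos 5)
Your proof is correct, and in fact the paper offers no proof of this statement at all --- it is quoted verbatim from Feng and Sidorov \cite{FengSid}, so there is nothing in the source to compare against beyond the reformulation of $\mathcal{E}_{k}(x,\beta)$ in Proposition \ref{Change of perspective}, which you invoke at exactly the right spot. Your argument (the cylinder estimate $\mu_{\beta}([x-r_{k},x+r_{k}])\geq \mathcal{N}_{k}(x,\beta)\,2^{-k}$, interpolation of a general radius $r$ between $r_{k+1}$ and $r_{k}$ with the sign bookkeeping you describe, then $\liminf_{k}L_{k}\geq\kappa(\beta)$ from Theorem \ref{Nik bounds}) is precisely the standard route to this bound, and each step, including the replacement of $\log r$ by $\log r_{k}$ justified by $\mathcal{N}_{k+1}2^{-(k+1)}\leq 1$, goes through as you claim.
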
 Replicating the arguments given in \cite{FengSid} and using the improved bounds given by Theorems \ref{Hausdorff dimension} and \ref{Improved bounds} we can show that the following result holds.

\begin{thm}
\label{Improved local dimension}
Let $\beta\in (1,\omega_{m}],$ then $$\overline{d}(\mu_{\beta},x)\leq \frac{1}{2m+1}\log_{\beta}2,$$ for every $x\in (0,\frac{1}{\beta-1}).$ Similarly for $\beta\in(1,\lambda_{m}],$ then $$\overline{d}(\mu_{\beta},x)\leq \frac{m+1}{m+2}\log_{\beta}2,$$ for every $x\in (0,\frac{1}{\beta-1}).$
\end{thm}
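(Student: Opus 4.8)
The plan is to follow the strategy of \cite{FengSid} used to establish Theorem \ref{Nik local dimension}, but to extract from it the general principle that a lower bound on the lower growth rate of $\mathcal{N}_{k}(x,\beta)$ yields an upper bound on $\overline{d}(\mu_{\beta},x)$, and then to feed in the improved dimension bounds of Theorems \ref{Hausdorff dimension} and \ref{Improved bounds} via the inequality (\ref{dimension inequality}). First I would relate the measure of a small ball about $x$ to the quantity $\mathcal{N}_{k}(x,\beta)$. Setting $r_{k}=\frac{1}{\beta^{k}(\beta-1)}$, the reformulation of $\mathcal{E}_{k}(x,\beta)$ from Proposition \ref{Change of perspective} shows that each valid $k$-prefix $(\epsilon_{1},\ldots,\epsilon_{k})$ satisfies $0\leq x-\sum_{n=1}^{k}\epsilon_{n}\beta^{-n}\leq r_{k}$; consequently every sequence beginning with this prefix projects under $(a_{n})\mapsto\sum a_{n}\beta^{-n}$ into $[x-r_{k},x+r_{k}]$. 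Since the cylinder sets associated to distinct $k$-prefixes are disjoint and each carries $\mathbb{P}$-measure $2^{-k}$, I obtain the lower bound $\mu_{\beta}([x-r_{k},x+r_{k}])\geq \mathcal{N}_{k}(x,\beta)\,2^{-k}$.

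Next I would convert this into a bound on $\overline{d}(\mu_{\beta},x)$. Because $r\mapsto\mu_{\beta}([x-r,x+r])$ is monotone, for any $r\in[r_{k+1},r_{k}]$ one has $\mu_{\beta}([x-r,x+r])\geq \mathcal{N}_{k+1}(x,\beta)\,2^{-(k+1)}$, while $\log r\leq \log r_{k}=-k\log\beta-\log(\beta-1)$. Taking care with the sign of $\log r<0$, dividing the logarithm of the measure bound by $\log r$ gives, for all sufficiently small $r$,
\[
\frac{\log\mu_{\beta}([x-r,x+r])}{\log r}\leq \frac{(k+1)\log 2-\log\mathcal{N}_{k+1}(x,\beta)}{k\log\beta+\log(\beta-1)}.
\]
Writing $\gamma=\liminf_{k\to\infty}\tfrac{\log_{2}\mathcal{N}_{k}(x,\beta)}{k}$ and letting $r\to 0$ (equivalently $k\to\infty$), the right-hand side tends to at most $(1-\gamma)\log_{\beta}2$, so that $\overline{d}(\mu_{\beta},x)\leq (1-\gamma)\log_{\beta}2$ for every $x\in(0,\tfrac{1}{\beta-1})$.

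Finally I would substitute the improved growth rates. By (\ref{dimension inequality}) together with Theorem \ref{Hausdorff dimension}, for $\beta\in(1,\omega_{m}]$ we have $\gamma\geq \dim_{H}(\Sigma_{\beta}(x))\geq \frac{2m}{2m+1}$, whence $\overline{d}(\mu_{\beta},x)\leq (1-\frac{2m}{2m+1})\log_{\beta}2=\frac{1}{2m+1}\log_{\beta}2$. Likewise, for $\beta\in(1,\lambda_{m}]$ Theorem \ref{Improved bounds} gives $\gamma\geq \frac{1}{m+2}$ and hence $\overline{d}(\mu_{\beta},x)\leq (1-\frac{1}{m+2})\log_{\beta}2=\frac{m+1}{m+2}\log_{\beta}2$, as claimed. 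I expect the main obstacle to be the bookkeeping in the second step: one must interpolate between the discrete radii $r_{k}$ and handle the sign reversal from dividing by the negative quantity $\log r$ correctly, so that the resulting estimate controls the full $\limsup$ over all sufficiently small $r$ rather than merely along the subsequence $r=r_{k}$.
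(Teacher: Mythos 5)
Your proposal is correct and is essentially the paper's own proof: the paper gives no details, saying only that the result follows by ``replicating the arguments given in \cite{FengSid}'' with the improved bounds of Theorems \ref{Hausdorff dimension} and \ref{Improved bounds} substituted in, and your argument --- the lower bound $\mu_{\beta}([x-r_{k},x+r_{k}])\geq \mathcal{N}_{k}(x,\beta)\,2^{-k}$ via disjoint prefix cylinders, the interpolation between the radii $r_{k}$ with the sign of $\log r$ handled correctly, and the substitution $\gamma\geq\frac{2m}{2m+1}$ (resp.\ $\gamma\geq\frac{1}{m+2}$) through (\ref{dimension inequality}) --- is precisely the Feng--Sidorov argument being invoked. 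The bookkeeping in your second step checks out, since $\mathcal{N}_{k+1}(x,\beta)\leq 2^{k+1}$ makes the numerator nonnegative and $-\log r\geq k\log\beta+\log(\beta-1)>0$ for large $k$, so the estimate indeed controls the full $\limsup$ and not just the subsequence $r=r_{k}$.
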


\section{Open questions and tables for $(\omega_{m})_{m=1}^{\infty}$ and $(\lambda_{m})_{m=1}^{\infty}$}
Here are a few open questions:
\begin{itemize}
\item Does the positivity of the lower growth rate of $\mathcal{N}_{k}(x,\beta)$ imply the Hausdorff dimension of $\Sigma_{\beta}(x)$ is positive?
\item Do we have equality in (\ref{dimension inequality})?
\item Under what conditions do we have equality in (\ref{dimension inequality})?
\item Is our choice of interval $\mathcal{I}_{m}$ in the proof of Theorem \ref{Hausdorff dimension} the most efficient?
\item Is our choice of interval $\mathcal{I}$ in the proof of Theorem \ref{Improved bounds} the most efficient?
\end{itemize}
The following tables list certain values of $\omega_{m}$ and $\lambda_{m}$ and their associated polynomials.
\begin{table}[ht]
\caption{Table of values for the sequence $(\omega_{m})_{m=1}^{\infty}$} 
\centering  
\begin{tabular}{c c c  } 
\hline\hline                        
$m$ & $\omega_{m}$(To 5DP) & Associated polynomials$$\\ [0.5ex] 
\hline                  
1 & 1.07445 & $P^{1}_{1}(x)=x^{7}-x^{4}-x^{3}-x^{2}+x+1 $\\ & & $P^{2}_{1}(x)=x^{5}-x^{4}-x^{2}+1$\\ & & $P^{3}_{1}(x)=x^{5}-x-1$\\ 
2 & 1.02838 & $P^{1}_{2}(x)=x^{11}-x^{6}-x^{4}-x^{3}+x+1$    \\ & & $P^{2}_{2}(x)=x^{7}-x^{6}-x^{2}+1$\\ & & $P^{2}_{k}(x)=x^{7}-x-1$\\
3 & 1.01492 & $P^{1}_{3}(x)=x^{15}-x^{8}-x^{5}-x^{4}+x+1$  \\ & & $P^{2}_{3}(x)=x^{9}-x^{8}-x^{2}+1$\\ & & $P^{3}_{3}(x)=x^{9}-x-1$\\
10 & 1.00172 & $P^{1}_{10}(x)=x^{43}-x^{22}-x^{12}-x^{11}+x+1$   \\ & & $P^{2}_{10}=x^{23}-x^{22}-x^{2}+1$\\ & & $P^{3}_{10}(x)=x^{23}-x-1$\\
100 & 1.00003 & $P^{1}_{100}(x)=x^{403}-x^{202}-x^{102}-x^{101}+x+1$   \\ & & $P^{2}_{100}(x)=x^{203}-x^{202}-x^{2}+1$\\ & & $P^{3}_{100}(x)=x^{203}-x-1$\\ [1ex]      
\hline 
\end{tabular}
\label{table 1} 
\end{table}
\begin{table}[ht]
\caption{Table of values for the sequence $(\lambda_{m})_{m=1}^{\infty}$} 
\centering  
\begin{tabular}{c c c} 
\hline\hline                        
$m$ & $\lambda_{m}$(To 5DP) & Associated polynomial \\ [0.5ex] 
\hline                  
1 & 1.32472(First Pisot number) & $x^{4}-x^{3}-x^{2}+1$  \\ 
2 & 1.46557 & $x^{5}-x^{4}-x^{3}+1$    \\
3 & 1.53416 & $x^{6}-x^{5}-x^{4}+1$    \\
10 & 1.61575 & $x^{13}-x^{12}-x^{11}+1$   \\
100 & 1.61804 & $x^{103}-x^{102}-x^{101}+1$  \\ [1ex]      
\hline 
\end{tabular}
\label{table 2} 
\end{table}
\newpage

\noindent \textbf{Acknowledgements} The author is indebted to Nikita Sidorov for much encouragement and guidance.

\noindent \textbf{Address} The University of Manchester, School of Mathematics, Oxford Road, Manchester, M13 9PL.

\end{document}